\def\<{\langle}
\def\>{\rangle}
\renewcommand{\phi}{\varphi}
\newtheorem{theorem}{Theorem}[section]
\newtheorem{lemma}[theorem]{Lemma}
\newtheorem{proposition}[theorem]{Proposition}
\newtheorem{corollary}[theorem]{Corollary}
\newtheorem{definition}[theorem]{Definition}
\title{On reduction curves and Garside properties of braids}
\author{Juan Gonz\'alez-Meneses}
\date{April 21, 2010}
\begin{document}

\maketitle


\begin{abstract}
In this paper we study the reduction curves of a braid, and how they can be used to decompose the braid into simpler ones in a precise way, which does not correspond exactly to the decomposition given by Thurston theory. Then we study how a cyclic sliding (which is a particular kind of conjugation) affects the normal form of a braid with respect to the normal forms of its components. Finally, using the above methods, we provide the example of a family of braids whose sets of sliding circuits (hence ultra summit sets) have exponential size with respect to the number of strands and also with respect to the canonical length.
\end{abstract}

\section{Introduction}

Braids can be seen as isotopy classes of orientation-preserving automorphisms of the $n$-times punctured disc $D_n$, that is, the braid group on $n$ strands $B_n$ is isomorphic to the mapping class group $\mathcal M(D_n)$. Attending to the Nielsen-Thurston classification of mapping classes, braids can be periodic, reducible or pseudo-Anosov. In this paper we shall study reducible braids, which are those braids preserving a family of disjoint, non-degenerate, simple closed curves in $D_n$.

From the algebraic point of view, the braid group $B_n$ has a well known lattice structure. The submonoid $B_n^+\subset B_n$ consists of those elements of $B_n$ which can be written as positive powers of the standard generators $\sigma_1,\ldots,\sigma_{n-1}$. This monoid defines a partial order of $B_n$ given by $a\preccurlyeq b \ \Leftrightarrow \ a^{-1}b\in B_n^+$. This is a lattice order, which is invariant under left multiplication. The triple $(B_n,B_n^+,\Delta)$, where $\Delta=\sigma_1(\sigma_2\sigma_1)\cdots(\sigma_{n-1}\sigma_{n-2}\cdots \sigma_1)$ is the braid known as {\it half twist} or {\it Garside element}, determines a Garside structure of the braid group $B_n$~\cite{DP}. This structure, first discovered by Garside~\cite{Garside}, has been very useful for showing many properties of $B_n$, as well as for providing a substantial number of solutions to the word problem and the conjugacy problem in this group. These algorithms can also be used in other groups sharing the same algebraic properties, which are known under the common name of {\it Garside groups}~\cite{DP}.

One of the latest solutions to the conjugacy problem in Garside groups, thus in braid groups, is given in~\cite{GG1} (see also~\cite{GG2}). In that paper, the {\it cyclic sliding} is defined as a special conjugation that can be applied to any given braid. Iterated application of cyclic sliding conjugates any braid to another one which has minimal length in its conjugacy class, and has some other good algebraic properties~\cite{GG1}. But in the case of braid groups, reducible braids can behave not so nicely with respect to cyclic sliding, as we shall see. Hence, if one is interested in conjugacy properties of braids, a deeper study of the relation between the geometric and algebraic properties of braids is needed.

It is well known that the essential reduction curves of a given braid decompose it into `pieces', or components, in the spirit of Thurston's decomposition of a mapping class. If the decomposition is done in the appropriate way, each component is again a braid, with fewer number of strands. These components are, in general, not carefully defined in the papers dealing with reducible braids: A reference to Thurston's decomposition of a mapping class is given instead. But the decomposition in the case of braids is not exactly the same, at least if one needs each of the resulting components to be a braid. In Section~\ref{S:reducible} we will explain the notion of reducible braid, and the decomposition of such a braid into braid components.

In Section~\ref{S:Garside} we briefly recall the notions we need from Garside theory. In particular we will define cyclic sliding and the {\it set of sliding circuits} of a braid. These sets are the ones computed in~\cite{GG1} to solve the conjugacy problem in Garside groups.

The normal form of a braid obtained from the Garside structure of $B_n$ is related to the normal form of each of its components, and this relation is more clear in the case in which the reduction curves are isotopic to geometric circles. We will study this in Section~\ref{S:CS and curves}, and we will see how the application of a cyclic sliding may transform the normal form of a reducible braid, and of each of its components.

Finally, using the results from previous sections we will provide, in Section~\ref{S:big_sets}, an example of a family of braids whose sets of sliding circuits have exponential size, with respect to the number of strands and also with respect to the length of the braids. We believe this is the first example of this kind.

\section{Reducible braids}\label{S:reducible}

\subsection{Reduction curves}

Let $D_n$ be the $n$-times punctured closed disc $D^2\backslash\{P_1,\ldots,P_n\}$. For simplicity, we will assume that $D^2$ is embedded in the complex plane $\mathbb C$, that its boundary is a geometric circle, and the $n$ punctures are the first $n$ natural numbers. A simple closed curve $\mathcal C$ in $D_n$ is said to be non-degenerate if it is not isotopic to a puncture or to the boundary of $D_n$, that is, if it encloses more than one and less than $n$ punctures. We will consider such curves up to isotopy, so we will denote by $[\mathcal C]$ the isotopy class of a curve $\mathcal C$. The term {\it curve} in this paper will be applied to either a particular non-degenerate, simple closed curve, or its isotopy class.

From now on, a {\it family of curves} $\mathcal F$ will mean a family of disjoint, non-degenerate, simple closed curves in $D_n$. Its isotopy class will be denoted $[\mathcal F]$. We will say that a curve $\mathcal C$ is {\it round} if it is isotopic in $D_n$ to a geometric circle. A family of curves $\mathcal F$ is round if each of its curves is round, or equivalently, if $\mathcal F$ is isotopic to a family of geometric circles.

A braid, being an isotopy class of automorphisms of $D_n$, acts on the set of (isotopy classes of) simple closed curves in $D_n$. Given a curve $\mathcal C$ and a braid $\beta\in B_n$, we will denote by $[\mathcal C]^\beta$ the curve obtained from $[\mathcal C]$ after the action induced by $\beta$. Similarly, we use the notation $[\mathcal F]^\beta$, where $[\mathcal F]$ is a family of curves.

A braid $\beta\in B_n$ is said to be {\it reducible} if there exists a family of curves $[\mathcal F]$ such that $[\mathcal F]^\beta = [\mathcal F]$. Equivalently, $\beta$ is reducible if there exist a curve $\mathcal C$ and a positive integer $m$ such that $[\mathcal C]^{\beta^m}=[\mathcal C]$ and the curves $\{\mathcal C, \mathcal C^{\beta},\ldots,\mathcal C^{\beta^{m-1}}\}$ are pairwise disjoint.  Such curves are called {\it reduction curves} of $\beta$. For instance, the braid $\beta=(\sigma_1 \sigma_2 \sigma_3\sigma_4 \sigma_5)^2\in B_6$ is a reducible braid, as it preserves the family of round curves $\mathcal F=\{\mathcal C_{1,2}, \mathcal C_{3,4}, \mathcal C_{5,6}\}$, where $C_{i,j}$ is the round curve determined by a geometric circle enclosing punctures $i$ to $j$ (see Figure~\ref{F:trenza_1}). In this example $\beta^3$ preserves $\mathcal F$ curve-wise.

\begin{figure}[ht]
\centerline{\includegraphics{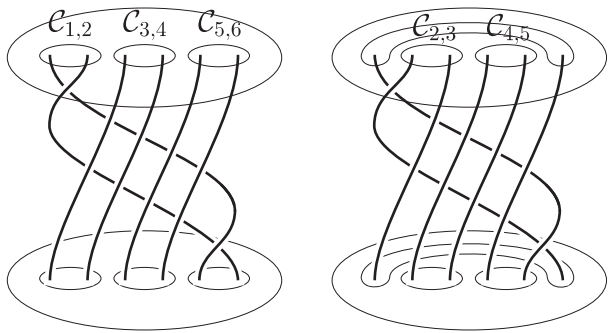}} \caption{The braid $(\sigma_1 \sigma_2 \sigma_3\sigma_4 \sigma_5)^2$ preserves two families of reduction curves.}
\label{F:trenza_1}
\end{figure}

Reduction curves are useful for decomposing braids into simpler ones. But a given braid may admit many (even infinite) distinct families of reduction curves, so the mentioned decompositions are a priori not unique. Nevertheless, there is a special, uniquely defined family of reduction curves of a braid, formed by the so called {\it essential reduction curves}~\cite{BLM}. In order to define them, we recall that the geometric intersection number of two curves $\mathcal C$ and $\mathcal D$ in a manifold, denoted $i(\mathcal C,\mathcal D)$, is the smallest cardinal of a set in $\{\mathcal C'\cap \mathcal D';\ \mathcal C'\in [\mathcal C], \: \mathcal D'\in [\mathcal D]\}$. Given $\beta\in B_n$, a curve $[\mathcal C]$ is said to be an essential reduction curve of $\beta$ if:
\begin{enumerate}

\item $[\mathcal C]$ is a reduction curve of $\beta$.

\item If $[\mathcal D]$ is such that $i(\mathcal C,\mathcal D)>0$, then $[\mathcal D]^{\beta^m}\neq [\mathcal D]$ for every integer $m>0$.

\end{enumerate}

The set of essential reduction curves of $\beta$ is called the {\it canonical reduction system} of $\beta$, and it is denoted $CRS(\beta)$. Notice that $CRS(\beta)$ is a family of disjoint reduction curves of $\beta$. Some good properties of $CRS(\beta)$ are that $CRS(\beta)=CRS(\beta^m)$ for every $m\neq 0$, and that $CRS(\alpha^{-1}\beta\alpha)=CRS(\beta)^\alpha$ for every $\alpha,\beta\in B_n$. Also, $CRS(\beta)=\emptyset$ if and only if $\beta$ is either periodic or pseudo-Anosov. For instance, in the example of Figure~\ref{F:trenza_1}, $CRS(\beta)=\emptyset$: The curves of $\mathcal F=\{\mathcal C_{1,2}, \mathcal C_{3,4}, \mathcal C_{5,6}\}$ are not essential, since they have nonzero geometric intersection with the round reduction curves $C_{2,3}$ and $C_{4,5}$. Actually, in that example $\beta$ is periodic, as $\beta^3=\Delta^2$ (recall that a braid is periodic if it has a nontrivial power belonging to the center of $B_n$, that is, to $\langle \Delta^2 \rangle$).

\subsection{Decomposition of a reducible braid}

Let $\beta$ be a non-periodic, reducible braid, so that $CRS(\beta)\neq \emptyset$. This canonical reduction system can be used to decompose $\beta$ into smaller braids, as we shall now see. The basic idea is to consider the action induced by $\beta$ on the connected components of $D_n\backslash\{CRS(\beta)\}$. Roughly speaking, each of these components is a punctured disc, so these restrictions are homeomorphisms of punctured discs, up to isotopy, and they can be considered as braids, which are usually called the {\it components} of $\beta$.  But there are several ambiguities in this definition, which the reader probably noticed. We will discuss about them in this section, so that we will be able to give a precise definition for the components of a braid $\beta\in B_n$.

Using the theory of mapping classes, Thurston's decomposition theorem (together with the definition of canonical reduction systems in~\cite{BLM}) states that $CRS(\beta)$ decomposes $D_n$ into two (not necessarily connected) invariant subsurfaces, such that $\beta$ restricted to one of them is pseudo-Anosov, and restricted to the second one has finite order (up to isotopy in a collar neighborhood of $CRS(\beta$)). We recall that braid groups have no torsion, but this does not cause any conflict with the above statement: Thurston's decomposition theorem deals with mapping classes in which the admissible isotopies fix the boundary setwise (and the theorem allows isotopy in a collar neighborhood of $CRS(\beta)$), while the admissible isotopies for braids fix the boundary of $D_n$ pointwise. For instance $\Delta^2$ is trivial when considered as a mapping class in Thurston's sense, as it corresponds to a Dehn twist along a curve parallel to the boundary $\partial(D_n)$, while it is certainly not a trivial braid. Hence $\Delta$ has finite order as a mapping class, but not as a braid.

In general, we do not want to decompose $\beta$ into a couple of mapping classes, as above, since in that case the resulting subsurfaces are not necessarily punctured discs, and the resulting mapping classes do not correspond to braids. We prefer to treat each connected component of $D_n\backslash CRS(\beta)$ independently.

Notice first that there is only one component of $D_n\backslash CRS(\beta)$ which is isomorphic to a punctured {\it closed} disc, namely the one containing the boundary of $D_n$. All other components are isomorphic to punctured open discs, that is, to punctured spheres. In order to avoid this situation, one can paste each curve of $CRS(\beta)$ to a connected component, in the following way.

Let $\mathcal F=CRS(\beta)\cup \{\partial(D_n)\}$. As $D_n$ embeds in the complex plane, and we are dealing with simple closed curves, we can rigourously talk about parts of $D_n$ {\it enclosed by} these curves. Then, for each curve $\mathcal C\in \mathcal F$, there is exactly one component $X_{\mathcal C}$ of $D_n\backslash \mathcal F$ which is enclosed by $\mathcal C$, and such that $\mathcal C\subset \overline {X_{\mathcal C}}$. We can then define $D_{\mathcal C}=X_{\mathcal C}\cup \mathcal C$, which is homeomorphic to a punctured closed disc with at least two punctures, and we can decompose $D_n$ as:
$$
    D_n=\bigsqcup_{\mathcal C\in \mathcal F}{D_{\mathcal C}}.
$$

Now notice that $\beta$ preserves $CRS(\beta)$ set-wise, but not necessarily curve-wise. For each $\mathcal C\in CRS(\beta)$, if $\beta$ sends $\mathcal C$ to $\mathcal C'\in CRS(\beta)$, then the punctured closed disc $D_{\mathcal C}$ is sent to $D_{\mathcal C'}$. Hence, the restriction of our braid $\beta$ to this component is a homeomorphism $\beta_{\mathcal C}: D_{\mathcal C} \rightarrow D_{\mathcal C'}$. Both $D_{\mathcal C}$ and $D_{\mathcal C'}$ are homeomorphic to closed discs with the same number of punctures, so $\beta_{\mathcal C}$ can be considered to be a braid, but not in a canonical way. Distinct homeomorphisms taking $D_{\mathcal C}$ and $D_{\mathcal C'}$ to the same punctured disc, yield distinct braids representing $\beta_{\mathcal C}$. Hence $\beta_{\mathcal C}$ is not well defined in this way, unless we are able to find a canonical way to send each $D_{\mathcal C}$ to a standard punctured disc.

Even if we find such a canonical way, there is another problem: Braids are defined up to isotopy fixing the boundary and the punctures, but one is allowed to {\it move} a curve in the interior of $D_n$. For instance, suppose that $[\mathcal C]\neq [\mathcal C']$. Now consider two curves $\mathcal C_1$ and $\mathcal C_2$ in a collar neighborhood of $\mathcal C$, both parallel to and disjoint from $\mathcal C$, one enclosing $\mathcal C$ and the other one enclosed by $\mathcal C$. Let $\tau_1$ and $\tau_2$ be Dehn twists along $\mathcal C_1$ and $\mathcal C_2$, respectively. Then the map $\gamma=\tau_2^{-1}\circ \tau_1\circ \beta$ is isotopic to $\beta$, so $\beta$ and $\gamma$ represent the same braid. But their restrictions to $D_{\mathcal C}$ do not coincide: They differ by a Dehn twist along the boundary, that is, one equals the other multiplied by the half twist $\Delta$.

In the above example, if $[\mathcal C]=[\mathcal C']$, that is, if $\beta$ sends $[\mathcal C]$ to itself, the restrictions of $\beta$ and $\gamma$ to $D_{\mathcal C}$ are conjugate by $\Delta$. Actually, if $CRS(\beta)$ is preserved by $\beta$ curve-wise, the restrictions of $\beta$ to each $D_{\mathcal C}$ are well defined up to conjugacy. But even in this case we are not happy enough.

An alternative approach could be to consider $\beta$ as a collection of strands: The three dimensional representation of a motion of $n$ punctures in the disc $D$. This motion can be extended to an isotopy of $D_n$ which sends the set of $n$ punctures to itself. Hence the curves in $CRS(\beta)$ can also be thought as {\it moving} curves, which trace a kind of {\it tube} enclosing some punctures. We could try to define the restriction of $\beta$ to $D_\mathcal C$ by considering only some strands starting inside the tube corresponding to $\mathcal C$. More precisely, consider $\overline{D_{\mathcal C}}\backslash D_{\mathcal C}$. This is a family of points and curves. The curves, $\mathcal C_1,\ldots,\mathcal C_r$, are the outermost curves in $CRS(\beta)$ enclosed by $\mathcal C$. The points correspond to the punctures of $D_n$ enclosed by $\mathcal C$ but not by $\mathcal C_1,\ldots,\mathcal C_r$. We could then try to define the braid $\beta_{\mathcal C}$ as the braid formed by the strands corresponding to those punctures, together with the {\it fat strands} corresponding to the tubes formed by $\mathcal C_1,\ldots,\mathcal C_r$. The problem here is that the curve $\mathcal C_i$ is not necessarily round, so its corresponding tube can be deformed in such a way that it is not clear to see how one could consider it as a strand. We can try to solve this problem in the following way.

Given a braid $\beta$, we can consider a {\it subbraid} by erasing some of its strands, that is, by filling some of the punctures of $D_n$. This is a priori not well defined, as the set of punctures that remain, say $I$, is not necessarily preserved by the braid. Actually, we would obtain a {\it partial braid}, an element of the fundamental groupoid of the configuration space of $m$ points in $D$ (\cite{Artin}, see also~\cite{EL}). There is a canonical element $\alpha_{I}$ in this groupoid, sending the points $\{1,\ldots,m\}$ to $I$, in which the punctures lie all the time along their motion in the diameter of $D$ corresponding to the real line (or in which the image of the diameter is the diameter). Then, if we denote by $\widetilde\beta_I$ the element of the groupoid obtained from $\beta$ by keeping only the strands starting at $I$, we can define $\beta_I = \alpha_I \widetilde\beta_I \alpha_{\beta(I)}^{-1}$, in which both the starting and ending points are $\{1,\ldots,m\}$, so $\beta_I$ is a well defined braid on $m$ strands, that we will call the {\it subbraid} of $\beta$ corresponding to $I$.  Notice that if one draws $\beta$ as a flat diagram, then $\beta_I\in B_m$ is the braid whose strands cross exactly in the same way as the strands starting at $I$ cross in $\beta$. See Figure~\ref{F:subtrenza}.

\begin{figure}[ht]
\centerline{\includegraphics{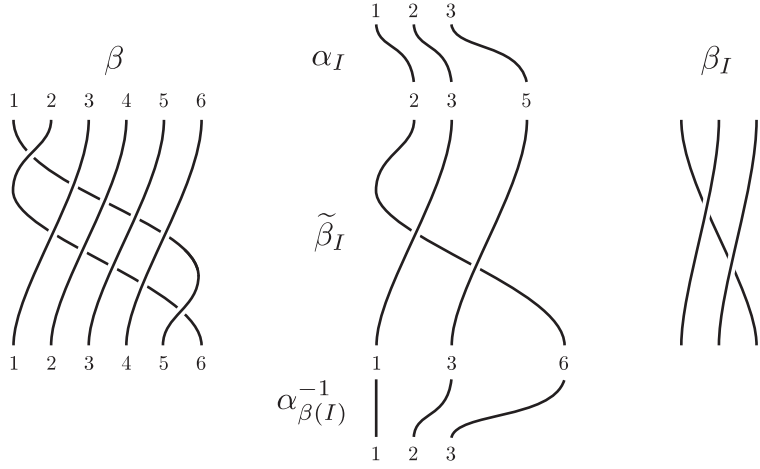}} \caption{The subbraid $\beta_I$, where $\beta=(\sigma_1 \sigma_2 \sigma_3\sigma_4 \sigma_5)^2$ and $I=\{2,3,5\}$.}
\label{F:subtrenza}
\end{figure}

We now go back to our original braid $\beta$ such that $CRS(\beta)\neq \emptyset$. Recall that we are trying to define a component $\beta_{\mathcal C}$ associated to $\mathcal C\in CRS(\beta)\cup\{\partial(D_n)\}$. Recall also the curves $\mathcal C_1,\ldots,\mathcal C_r$ defined above. We can now define a subset $I\subset \{1,\ldots,n\}$ containing all punctures enclosed by $\mathcal C$ and not enclosed by any ${\mathcal C}_i$, together with one puncture enclosed by $\mathcal C_i$, for $i=1,\ldots,r$. We could then define $\beta_{\mathcal C}$ as the subbraid $\beta_I$, as this fits with our intuitive idea. Unfortunately, this is not well defined, as the following example shows.

Consider, the braid $\beta=\sigma_2\sigma_2\sigma_1 \sigma_3\in B_4$, whose canonical reduction system $CRS(\beta)=\{\mathcal C_1,\mathcal C_2\}$ is drawn in Figure~\ref{F:trenza_2}. It is clear that $\beta_{\mathcal C_1}$ and $\beta_{\mathcal C_2}$ should be trivial braids on two strands. But if we try to take a subbraid of $\beta$ to define $\beta_{\partial(D_4)}$, this would depend on the choice of punctures: $\beta_{\{1,2\}}=\sigma_1\in B_2$, while $\beta_{\{1,4\}}=1\in B_2$. This problem comes from the fact that $\mathcal C_1$ and $\mathcal C_2$ are not round curves, otherwise the choice of the strands inside these curves would be irrelevant.

\begin{figure}[ht]
\centerline{\includegraphics{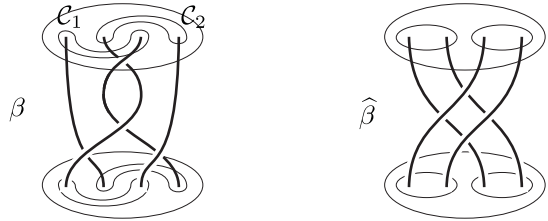}} \caption{The braid $\beta=\sigma_2 \sigma_2 \sigma_1\sigma_3$ and its conjugate by $\sigma_2$,  $\widehat\beta = \sigma_2 \sigma_1\sigma_3\sigma_2$.
}
\label{F:trenza_2}
\end{figure}

The solution to the above problem is given by Sang Jin Lee and Eon-Kyung Lee~\cite{LL}. In this remarkable paper they show that, given an isotopy class $[\mathcal F]$ of a family of curves in $D_n$, there is a {\it unique} positive braid $\alpha\in B_n$ such that $[\mathcal F]^{\alpha}$ is round, and $\alpha$ has minimal length among all positive braids satisfying this property. Actually, $\alpha$ is a prefix of any other positive braid transforming $\mathcal F$ into a family of round curves. Let us call $\alpha$ the {\it minimal standardizer} of $\mathcal F$.

Denote $\widehat \beta= \alpha^{-1}\beta\alpha$, and notice that $\alpha$ sends the curves $\mathcal C,\mathcal C_1,\ldots,\mathcal C_r$ to round curves that we denote, respectively,
$\widehat{\mathcal C},\widehat{\mathcal C_1},\ldots,\widehat{\mathcal C_r}$. We will finally define $\beta_{\mathcal C}$ as the subbraid of $\widehat\beta$ obtained by taking one strand inside each $\widehat{\mathcal C_i}$, plus the strands corresponding to the punctures of $D_{\widehat{\mathcal C}}$. More precisely:

\begin{definition}\label{D:component}
Let $\beta\in B_n$ and $\mathcal C\in CRS(\beta)\cup \{\partial(D_n)\}$. Let $I\subset \{1,\ldots,n\}$ be obtained from $\overline{D_{\widehat{\mathcal C}}}\backslash D_{\widehat{\mathcal C}}$ by replacing each curve with a puncture enclosed by that curve. Then we define $\beta_{\mathcal C}$, the component of $\beta$ associated to $\mathcal C$, as the subbraid $(\widehat\beta)_I$.
\end{definition}

In the example given in Figure~\ref{F:trenza_2}, in order to obtain $\beta_{\partial(D_n)}$ one first needs to conjugate $\beta=\sigma_2\sigma_2\sigma_1\sigma_3$ by its minimal standardizer $\alpha=\sigma_2$, to obtain $\widehat\beta= \sigma_2\sigma_1\sigma_3\sigma_2$, where we can clearly see that $\beta_{\partial(D_n)} = \widehat\beta_{\{1,3\}} = \widehat\beta_{\{1,4\}} = \widehat\beta_{\{2,3\}} = \widehat\beta_{\{2,4\}} = \sigma_1\in B_2$.

We remark that this notion of component of a braid, although not stated exactly in this way, is the same as the one given in~\cite{LL}.

Another important remark, to avoid confusion, is that if one decomposes $\beta$ along $CRS(\beta)$ into smaller braids, these braids are not necessarily pseudo-Anosov or periodic, as it happened in Thurston's decomposition theorem.  In order to apply Thurston's theorem to the case of braids, one needs the following:

\begin{definition}
Let $\beta\in B_n$ and let $\mathcal F=CRS(\beta)\cup \{\partial(D_n)\}$. Given $\mathcal C\in \mathcal F$, let $\mathcal C_{i}\in[\mathcal C]^{\beta^i}$ for $i\geq 0$, and let $m$ be the smallest positive integer such that $[\mathcal C_m]=[\mathcal C_0]=[\mathcal C]$. Then we define the {\bf interior braid} of $\beta$ associated to $\mathcal C$ to be
$$
 \beta^{\circ}_{\mathcal C}:=(\beta^m)_{\mathcal C}=\beta_{\mathcal C_0}\beta_{\mathcal C_1}\cdots \beta_{\mathcal C_{m-1}}.
$$
\end{definition}

Thanks to this notion of interior braid we can use Thurston's theorem in the case of braids.
More precisely, $\beta_{\mathcal C}$ is not necessarily periodic or pseudo-Anosov as $\mathcal C$ is not necessarily preserved by $\beta$, but if we take a suitable power of $\beta$ which fixes $\mathcal C$, its corresponding component ($\beta^{\circ}_{\mathcal C}$) is indeed either periodic or pseudo-Anosov.

We end this section by pointing out that in the definition of $\beta_{\mathcal C}$ we did not make use of the fact that $\mathcal C$ is an essential curve, but only that it belongs to a family of curves which is invariant under $\beta$. Actually, we can decompose a braid $\beta$ along any invariant family of curves $\mathcal F$ (containing $\partial(D_n)$), in the same way as above. The notation should be modified in this case, and we will denote $\beta_{[\mathcal C\in \mathcal F]}$ to be the component associated to $\mathcal C$ in this decomposition.  In particular $\beta_\mathcal C= \beta_{[\mathcal C\in CRS(\beta)]}$.

In some cases we do not even need the family $\mathcal F$ to be preserved by $\beta$. If $\mathcal F$ is a family of round curves such that $[\mathcal F]^\beta$ is also a family of round curves, we can just define $\beta_{[\mathcal C\in \mathcal F]}$ as in Definition~\ref{D:component} by replacing $CRS(\beta)$ with $\mathcal F$, $D_{\widehat{\mathcal C}}$ with $D_{\mathcal C}$ and $\widehat\beta$ with $\beta$. So in this case $\beta_{[\mathcal C\in \mathcal F]}$ is the subbraid $\beta_{I}$, where $I$ is obtained from $\overline{D_{\mathcal C}}\backslash D_{\mathcal C}$ by replacing each curve with a puncture enclosed by that curve. This subbraid is well defined thanks to the roundness of $[\mathcal F]$ and $[\mathcal F]^\beta$.

\section{Garside structure: Cyclic sliding and sliding circuits}\label{S:Garside}

We now briefly recall the notions introduced in~\cite{GG1} to solve the conjugacy problem in Garside groups (in particular in braid groups), and which replace the previous notions of cyclings, decyclings~\cite{EM} and ultra summit sets~\cite{Gebhardt}.

Recall that the braid group (and every Garside group) has a lattice structure, so every two elements $\alpha,\beta\in B_n$ admit an element $\alpha\wedge \beta$, called greatest common prefix, which is their meet with respect to the partial order $\preccurlyeq$.  Using this, one can define a normal form of the elements in $B_n$, called the {\bf left normal form}\cite{Deligne,Adyan,EM,Epstein}, which is a decomposition of any element $x\in B_n$ as $\Delta^p x_1\cdots x_r$, where $p$ is the maximal integer such that $\Delta^{-p}x$ is positive, each $x_i$ is nontrivial, and $x_i=(x_i\cdots x_r)\wedge \Delta$, for $i= 1,\ldots,r$. The {\bf infimum}, {\bf supremum} and {\bf canonical length} of $x$ are defined, respectively, $\inf(x)=p$, $\sup(x)=p+r$ and $\ell(x)=r$.

The factors $x_1,\ldots,x_r$ in the above decomposition are {\bf simple elements}, that is, positive prefixes of $\Delta$. If $s$ is simple, denote $\partial(s)=s^{-1}\Delta$, the {\bf complement} of $s$, which is also simple. It is well known that if $a$ and $b$ are two simple elements, the left normal form of the product $ab$ is equal to $(as)t$ (the first factor is $as$ and the second one is $t$), where $s=\partial(a)\wedge b$. Notice that in this case $as$ could be equal to $\Delta$ and $t$ could be trivial.

Let $\tau$ be the inner automorphism of $B_n$ associated to $\Delta$, that is, $\tau(\alpha)=\Delta^{-1}\alpha\Delta$ for any $\alpha\in B_n$. From the left normal form $x=\Delta^p x_1\cdots x_r$ we can define the {\bf initial factor} of $x\in B_n$ as $\iota(x)=(x\Delta^{-p})\wedge \Delta$. That is, $\iota(x)=(\tau^{-p}(x_1\cdots x_r))\wedge \Delta$, so if $r=0$ one has $\iota(x)=1$, while if $r>0$ one obtains $\iota(x)=\tau^{-p}(x_1)$.  We define the {\bf final factor} of $x$ as $\varphi(x)= (\Delta^{p+r-1}\wedge x)^{-1}x$, which means $\varphi(x)=x_r$ if $r>0$ and $\varphi(x)=\Delta$ if $r=0$. It is well known that $\iota(x^{-1})=\partial(\varphi(x))$.

The {\bf preferred prefix} of $x$ is defined by $\mathfrak p(x)=\iota(x)\wedge \iota(x^{-1})$, and the {\bf cyclic sliding} of $x$ is the conjugate of $x$ by its preferred prefix, that is, $\mathfrak s(x)=\mathfrak p(x)^{-1} x \mathfrak p(x)$.  To see why this is a natural definition, see~\cite{GG1}. For the moment, just notice that if $x$ has nonzero canonical length, $\mathfrak p(x) = \iota(x^{-1})\wedge \iota(x)= \partial(x_r) \wedge  \tau^{-p}(x_1)$, hence the first factor in the left normal form of $x_r\tau^{-p}(x_1)$ is precisely $x_r\mathfrak p(x)$.

We say that an element $y\in B_n$ is in a {\bf sliding circuit} if $\mathfrak s^m(y)=y$ for some $m>0$.  The {\bf set of sliding circuits} of a braid $x$, denoted $SC(x)$, is the set of conjugates of $x$ belonging to a sliding circuit. There is a simple algorithm to solve the conjugacy problem in $B_n$ (and in any Garside group), by using cyclic slidings and by computing sets of sliding circuits~\cite[Algorithm 0]{GG2}.

In~\cite{GG1} it is shown that application of cyclic sliding will never increase the canonical length of an element. Hence, as the set of conjugates of a given element with bounded canonical length is finite, it follows that any braid $x$ can be conjugated to a braid in $SC(x)$ by applying iterated cyclic slidings to $x$.

\section{Cyclic sliding  and round reduction curves}\label{S:CS and curves}

\subsection{cyclic sliding preserves roundness}

In this section we shall investigate the behavior of the elements having round reduction curves, under application of a cyclic sliding. The first result we need to recall is that if the roundness of a curve is preserved by a braid~$x$, then it is preserved by each factor in the left normal form of~$x$.

\begin{theorem}\label{T:BGN}{\rm \cite{BGN,LL}}
Let $x\in B_n$ be a positive braid whose left normal form is $x_1\cdots x_r$. If~$[\mathcal C]$ is a round curve such that $[\mathcal C]^x$ is also round, then $[\mathcal C]^{x_1\cdots x_i}$ is round for $i=1,\ldots,r$.
\end{theorem}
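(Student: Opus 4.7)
The plan is to argue by induction on $i$, using the Lee--Lee minimal standardizer theorem recalled earlier in Section~\ref{S:reducible} as the main technical input. The base case $i=0$ is the hypothesis that $[\mathcal{C}]$ is round, and the case $i=r$ is the hypothesis that $[\mathcal{C}]^{x}$ is round. For the intermediate cases, the strategy is to constrain the minimal standardizer of $[\mathcal{C}]^{x_1\cdots x_i}$ from two different sides simultaneously, using the rest of the braid $x_{i+1}\cdots x_r$ on one side and the complement $\partial(x_i)$ of the current factor on the other, and then to show that these two constraints together force the minimal standardizer to be trivial.

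More precisely, fix $1\leq i\leq r-1$ and assume inductively that $[\mathcal{D}] := [\mathcal{C}]^{x_1\cdots x_{i-1}}$ is round. Set $[\mathcal{E}] := [\mathcal{D}]^{x_i}=[\mathcal{C}]^{x_1\cdots x_i}$ and let $\alpha$ be the minimal standardizer of $[\mathcal{E}]$. On the one hand, $x_{i+1}\cdots x_r$ is a positive braid that takes $[\mathcal{E}]$ to the round curve $[\mathcal{C}]^{x}$, so by Lee--Lee we have $\alpha\preccurlyeq x_{i+1}\cdots x_r$. On the other hand, $\partial(x_i)=x_i^{-1}\Delta$ takes $[\mathcal{E}]$ to $[\mathcal{D}]^{\Delta}$; since $\Delta$ acts as a half-rotation of $D_n$ and therefore sends round curves to round curves, and since $[\mathcal{D}]$ is round by the inductive hypothesis, $[\mathcal{D}]^{\Delta}$ is round. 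A second application of Lee--Lee then gives $\alpha\preccurlyeq \partial(x_i)$.

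In particular $\alpha$ is simple, since it divides the simple element $\partial(x_i)$. Now a suffix of a left normal form is itself in left normal form, so $x_{i+1}\cdots x_r$ is in left normal form, and its greatest simple prefix is $x_{i+1}=(x_{i+1}\cdots x_r)\wedge\Delta$. As $\alpha$ is a simple prefix of $x_{i+1}\cdots x_r$, this yields $\alpha\preccurlyeq x_{i+1}$. Combined with $\alpha\preccurlyeq \partial(x_i)$ we obtain $\alpha\preccurlyeq x_{i+1}\wedge\partial(x_i)$, which equals $1$ by the defining property of the left normal form $x_i x_{i+1}$. Hence $\alpha=1$, so $[\mathcal{E}]$ is already round, completing the induction. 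The main obstacle is really conceptual rather than computational: one has to see that the minimal standardizer is squeezed between the ``forward'' positive braid $x_{i+1}\cdots x_r$ and the ``backward'' simple element $\partial(x_i)$, so that once Lee--Lee's minimality property is in hand the statement reduces to the local left-normal-form condition $\partial(x_i)\wedge x_{i+1}=1$.
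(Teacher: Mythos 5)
Your proof is correct, and it is worth pointing out that the paper itself contains no proof of Theorem~\ref{T:BGN}: the statement is quoted from \cite{BGN,LL}, so there is no internal argument to compare against, and what you have written is a genuine self-contained derivation from the tools the paper does record. Each step checks out: the minimal standardizer $\alpha$ of $[\mathcal C]^{x_1\cdots x_i}$ is a prefix of every positive braid carrying that curve to a round one (this is exactly the prefix property of \cite{LL} recalled in Section~\ref{S:reducible}); $x_{i+1}\cdots x_r$ is such a braid because $[\mathcal C]^x$ is round, and $\partial(x_i)$ is such a braid because it carries $[\mathcal C]^{x_1\cdots x_i}$ to $[\mathcal C]^{x_1\cdots x_{i-1}}$ moved by $\Delta$, which is round by the inductive hypothesis and the fact (also stated in the paper) that $\Delta^{\pm1}$ preserves roundness; simplicity of $\alpha$ (as a prefix of the simple element $\partial(x_i)$) upgrades $\alpha\preccurlyeq x_{i+1}\cdots x_r$ to $\alpha\preccurlyeq (x_{i+1}\cdots x_r)\wedge\Delta=x_{i+1}$ directly from the definition of left normal form; and $\partial(x_i)\wedge x_{i+1}=1$ is the left-weightedness of consecutive factors, so $\alpha=1$. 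Your route differs from the original sources: \cite{BGN} prove the statement by a direct combinatorial/geometric analysis of how permutation braids act on round curves, whereas you obtain it formally from the Lee--Lee minimal standardizer theorem plus the local normal-form condition; what this buys is brevity and a purely Garside-theoretic argument, at the cost of resting on the substantially deeper standardizer theorem. The only caution is a potential circularity: since Theorem~\ref{T:BGN} is attributed to both \cite{BGN} and \cite{LL}, you should note (or verify) that the proof in \cite{LL} of the prefix/lattice property of standardizers does not itself rely on this theorem; it is established there independently (via the sublattice structure of the set of standardizers), so your argument is legitimate, but the dependence deserves an explicit sentence.
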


Since $\Delta^{\pm 1}$ preserves the roundness of every curve, the above result can be applied to every braid, not necessarily positive.  This is used in~\cite{BGN} to show that, if a braid preserves a round curve, its cycling and its decycling also preserve round curves. This immediately implies that for every reducible braid~$x$, there is some element in its super summit set $SSS(x)$ which preserves a round curve~\cite{BGN}. Clearly, one can replace $SSS(x)$ by $USS(x)$ in the previous statement. Even better, one can replace it by $SC(x)$, as we will now see, but the proof of this fact is slightly different: we need to show the following result, concerning invariant families of round curves.

\begin{proposition}\label{P:sliding_preserves_roundness}
Let $x\in B_n$, and let~$\mathcal F$ be a family round curves such that $[\mathcal F]^x=[\mathcal F]$. Then $[\mathcal F]^{\mathfrak p(x)}$ is also a family of round curves.   Hence, if~$x$ preserves a family of round curves, then so does~$\mathfrak s(x)$.
\end{proposition}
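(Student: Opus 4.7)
My plan is to obtain $[\mathcal F]^{\mathfrak p(x)}$ as a round family by exposing $\mathfrak p(x)$ as a suffix of the first simple factor in the left normal form of a carefully chosen two-factor positive word, so that Theorem~\ref{T:BGN} can be applied. Write $x$ in left normal form as $\Delta^p x_1 \cdots x_r$; if $r=0$ then $\iota(x)=\iota(x^{-1})=1$, hence $\mathfrak p(x)=1$ and the conclusion is immediate. Assume henceforth that $r\geq 1$.

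First I would reduce to a positive braid and use Theorem~\ref{T:BGN} once. Because $\Delta^{\pm 1}$ preserves roundness of every curve, the family $\mathcal F_0 := [\mathcal F]^{\Delta^p}$ is round, and the positive braid $y = x_1\cdots x_r$ sends $\mathcal F_0$ to $[\mathcal F]^x = \mathcal F$, which is also round. Applying Theorem~\ref{T:BGN} to $y$ in its left normal form therefore yields that every intermediate image $[\mathcal F_0]^{x_1 \cdots x_i}$ is round; in particular, $\mathcal G := [\mathcal F_0]^{x_1\cdots x_{r-1}}$ is round, and so is $[\mathcal F_0]^{x_1}$.

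Next I would invoke the observation, highlighted in the text just after the definition of $\mathfrak p(x)$, that the element
\[
    w := \varphi(x)\,\iota(x) = x_r\,\tau^{-p}(x_1)
\]
has left normal form $w = s_1 s_2$ with $s_1 = x_r\,\mathfrak p(x)$ and $s_2 = \mathfrak p(x)^{-1}\iota(x)$; this follows from the product rule $ab=(as)\,t$ with $s=\partial(a)\wedge b$, because $\partial(x_r)\wedge \iota(x) = \iota(x^{-1})\wedge\iota(x) = \mathfrak p(x)$. Since $[\mathcal G]^{x_r} = [\mathcal F]^x = \mathcal F$, a short computation gives
\[
    [\mathcal G]^w = [\mathcal F]^{\tau^{-p}(x_1)} = [\mathcal F_0]^{x_1\,\Delta^{-p}},
\]
which is round by the previous step. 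Applying Theorem~\ref{T:BGN} to $w = s_1 s_2$ starting from the round family $\mathcal G$, the intermediate image $[\mathcal G]^{s_1} = [\mathcal F]^{\mathfrak p(x)}$ is round, which is exactly the first assertion. The second assertion follows by a one-line computation: $[\mathcal F]^{\mathfrak p(x)\,\mathfrak s(x)} = [\mathcal F]^{x\,\mathfrak p(x)} = [\mathcal F]^{\mathfrak p(x)}$, so $\mathfrak s(x)$ preserves the round family $[\mathcal F]^{\mathfrak p(x)}$.

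The main difficulty is conceptual rather than computational. Since $\iota(x)$ is already simple, a direct use of Theorem~\ref{T:BGN} on $\iota(x)$ yields nothing beyond the hypothesis, and one has to spot the right two-factor product inside which $\mathfrak p(x)$ naturally appears. The definition $\mathfrak p(x) = \iota(x)\wedge\iota(x^{-1})$ is precisely engineered so that $x_r\,\mathfrak p(x)$ is the first LNF factor of $\varphi(x)\iota(x)$, which is what makes the single extra application of Theorem~\ref{T:BGN} succeed. Edge cases such as $s_1 = \Delta$ or $s_2 = 1$ only simplify matters, since $\Delta$ preserves roundness of every curve.
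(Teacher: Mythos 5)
Your proof is correct and follows essentially the same route as the paper: reduce to the positive part, use Theorem~\ref{T:BGN} to get roundness of $[\mathcal F]^{\Delta^p x_1\cdots x_{r-1}}$ and $[\mathcal F]^{\tau^{-p}(x_1)}$, and then apply Theorem~\ref{T:BGN} once more to $x_r\tau^{-p}(x_1)$, whose first normal-form factor is $x_r\mathfrak p(x)$. Your extra attention to the cases $r=0$, $s_1=\Delta$ and $s_2=1$ is harmless and only makes the argument slightly more explicit than the paper's.
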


\begin{proof}
Let $\Delta^p x_1\cdots x_r$ be the left normal form of~$x$. We can assume~$r>0$. By Theorem~\ref{T:BGN} applied to each particular curve of~$\mathcal F$, one has that $[\mathcal F]^{\Delta^p x_1}$ is a family of round curves, and further application of $\Delta^{-p}$ yields the roundness of $[\mathcal F]^{\Delta^p x_1 \Delta^{-p}}= [\mathcal F]^{\tau^{-p}(x_1)}$. In the same way, Theorem~\ref{T:BGN} tells us that the curves of $[\mathcal F]^{\Delta^p x_1\cdots x_{r-1}}$ are round. Let~$\mathcal F_1$ be a family of curves such that $[\mathcal F_1]=[\mathcal F]^{\Delta^p x_1\cdots x_{r-1}}$, and let~$\mathcal F_2$ be such that $[\mathcal F_2]=[\mathcal F]^{\tau^{-p}(x_1)}$.

Notice that $[\mathcal F_1]^{x_r} =[\mathcal F]^{ (\Delta^p x_1\cdots x_{r-1}) x_r} = [\mathcal F]^{x} = [\mathcal F]$. Therefore we have $[\mathcal F_1]^{x_r \tau^{-p}(x_1)} = [\mathcal F_2]$, where~$\mathcal F_1$ and~$\mathcal F_2$ are families of round curves. Now recall that the first factor in the left normal form of $x_r \tau^{-p}(x_1)$ is $x_r\mathfrak p(x)$.  By Theorem~\ref{T:BGN} again, we obtain that the curves of $[\mathcal F_1]^{x_r \mathfrak p(x)}$ are round. But $[\mathcal F_1]^{x_r \mathfrak p(x)}= [\mathcal F]^{\mathfrak p(x)}$, hence $[\mathcal F]^{\mathfrak p(x)}$ is a family of round curves, as we wanted to show.
\end{proof}

As one can find an element in $SC(x)$ starting from a braid $x$, by iterated application of cyclic sliding, the following consequence of the above result is immediate:

\begin{corollary}\label{C:round curves in SC^m}
For every reducible, non periodic braid $x\in B_n$  there is some $y\in SC(x)$ such that $CRS(y)$ consists of round curves. Moreover, all elements in the sliding circuit of~$y$ satisfy the same property.
\end{corollary}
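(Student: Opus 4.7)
The plan is to produce a conjugate of $x$ whose canonical reduction system is already round, and then to show that roundness of the $CRS$ is preserved under each cyclic sliding; iterating then reaches $SC(x)$ without losing this property.

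First I would find the initial conjugate. Since $x$ is reducible and non-periodic, $CRS(x)\neq\emptyset$. Let $\alpha\in B_n^+$ be the Lee--Lee minimal standardizer of $CRS(x)$, chosen so that $[CRS(x)]^{\alpha}$ is a family of round curves, and set $z=\alpha^{-1}x\alpha$. Using the conjugacy-equivariance $CRS(\alpha^{-1}x\alpha)=CRS(x)^{\alpha}$ recalled in Section~\ref{S:reducible}, one gets that $CRS(z)$ is round. Since $SC$ is a conjugacy invariant, $SC(z)=SC(x)$, so from now on I can work with $z$ in place of $x$.

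The key step is to show that roundness of $CRS$ is preserved by $\mathfrak s$. Suppose a conjugate $z'$ of $x$ has $CRS(z')$ round. The family $\mathcal F=CRS(z')$ is invariant under $z'$ set-wise, because $CRS$ is a conjugacy invariant and so $CRS(z')^{z'} = CRS((z')^{-1}z'z') = CRS(z')$. Thus $\mathcal F$ satisfies the hypotheses of Proposition~\ref{P:sliding_preserves_roundness}, which gives that $[\mathcal F]^{\mathfrak p(z')}$ is round. Then
\[
CRS(\mathfrak s(z')) = CRS(\mathfrak p(z')^{-1}z'\mathfrak p(z')) = CRS(z')^{\mathfrak p(z')} = [\mathcal F]^{\mathfrak p(z')}
\]
is round, as claimed.

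To finish, I would invoke the fundamental fact recalled at the end of Section~\ref{S:Garside}: iterated cyclic sliding starting from $z$ eventually reaches an element $y$ in a sliding circuit, so $y\in SC(z)=SC(x)$. Inductive use of the previous step along this sequence shows that $CRS(y)$ is round. Any other element in the sliding circuit of $y$ is obtained from $y$ by finitely many further applications of $\mathfrak s$, so one more induction establishes the ``moreover'' statement. I do not expect any real obstacle here: essentially all the work was done in Proposition~\ref{P:sliding_preserves_roundness}, and what remains is the small bookkeeping identity $CRS(z')^{z'}=CRS(z')$ together with the equivariance of $CRS$ under conjugation.
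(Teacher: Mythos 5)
Your proposal is correct and follows essentially the same route as the paper: pass to a conjugate whose canonical reduction system is round, then use Proposition~\ref{P:sliding_preserves_roundness} together with the equivariance $CRS(\alpha^{-1}\beta\alpha)=CRS(\beta)^{\alpha}$ to see that each cyclic sliding keeps the $CRS$ round, and iterate until reaching $SC(x)$. You merely make explicit what the paper leaves implicit (the Lee--Lee minimal standardizer for the initial conjugate, the bookkeeping $CRS(\mathfrak s(z'))=CRS(z')^{\mathfrak p(z')}$, and the ``moreover'' statement via the $\mathfrak s$-orbit of $y$), which is fine.
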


\begin{proof}
Given $x$, there is always a conjugate $z$ of $x$ whose canonical reduction system consists of round curves. Applying iterated cyclic sliding to $z$ preserves the roundness of the canonical reduction system, hence one will eventually obtain an element in $y\in SC(x)$ whose canonical reduction system is round.
\end{proof}

Notice that the above proof does not provide an algorithm to find~$y$, since we do not know a priori which is the braid $z$ conjugate to $x$. Nevertheless, since $SC(x)$ is a finite set, one can compute the whole $SC(x)$ and check for each element whether it preserves some family of round curves. In this way one can find a reduction curve for~$y$, and then for~$x$.  The computation of the whole set $SC(x)$, starting from $x$, is given in~\cite{GG1}, and the way to check whether a given element preserves a collection of round curves can be found in~\cite{BGN}.

\subsection{Preferred prefix and round reduction curves}

We know that cyclic sliding preserves the roundness of the canonical reduction system of a braid. That is, if $CRS(y)$ is made of round curves, the preferred prefix $\mathfrak p(y)$ preserves this roundness.  But we can say much more about the preferred prefix, as we will see in this section. We will show that if $[\mathcal F]$ is a family of round curves, and $[\mathcal F]^y=[\mathcal F]$, then  the components of $\mathfrak p(y)$ with respect to $\mathcal F$ are determined by the left normal forms of the components of $y$ with respect to $\mathcal F$, in a very precise way.

Notice that if $\mathcal F$ is a family of curves such that $[\mathcal F]$ and $[\mathcal F]^y$ are round, the braid~$y$ is completely determined by~$\mathcal F$ and by its components with respect to $\mathcal F$. Moreover, the left normal form of~$y$ determines the left normal form of its components, and vice-versa. We will restrict to the positive case, for simplicity. For every positive integer $k$, denote $\Delta_k$ the half twist in the braid group $B_k$.

\begin{lemma}\label{L:Lee_Lee_Lemma}{\rm (\cite[Lemma 3.4 (x)]{LL})}
Let $y\in B_n^+$, and let $\mathcal F$ be a family of curves such that $[\mathcal F]$ and $[\mathcal F]^y$ are round. Then for every $\mathcal C \in \mathcal F$ enclosing $k$ punctures, one has $ y_{[\mathcal C\in \mathcal F]}\wedge \Delta_k= (y\wedge \Delta_n)_{[\mathcal C\in \mathcal F]}$.
\end{lemma}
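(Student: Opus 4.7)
The plan is to read off the $\mathcal{C}$-component of $y$ from the left normal form of $y$ in $B_n$. Write $y = y_1 y_2 \cdots y_r$ in left normal form, with each $y_i$ simple. Applying Theorem~\ref{T:BGN} to each positive prefix $y_1 \cdots y_i$ shows that every intermediate family $\mathcal{G}_i := [\mathcal{F}]^{y_1 \cdots y_i}$ is round (with $\mathcal{G}_0 = \mathcal{F}$); let $\mathcal{D}_i$ be the image of $\mathcal{C}$ under $y_1 \cdots y_i$, so that $\mathcal{D}_i \in \mathcal{G}_i$ is a round curve enclosing $k$ punctures. Write $\widetilde{y_i} := (y_i)_{[\mathcal{D}_{i-1} \in \mathcal{G}_{i-1}]} \in B_k$. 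Since on positive braids with the roundness hypothesis the component is just a subbraid, the product formula $y_{[\mathcal{C}\in\mathcal{F}]} = \widetilde{y_1} \widetilde{y_2} \cdots \widetilde{y_r}$ holds in $B_k$, and each $\widetilde{y_i}$ is simple: any two of its strands are two strands of the simple braid $y_i$, so they cross at most once.

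The heart of the proof is to show that the above decomposition is the left normal form of $y_{[\mathcal{C}\in\mathcal{F}]}$ in $B_k$, i.e.\ that $\partial(\widetilde{y_i}) \wedge \widetilde{y_{i+1}} = 1$ in $B_k$ for every $i = 1,\ldots,r-1$. Arguing by contradiction, suppose a generator $\sigma_j \in B_k$ is a common prefix. Roundness of $\mathcal{D}_i$ guarantees that it encloses a consecutive interval of punctures of $D_n$, and that its $k$ immediate sub-objects (direct punctures of $\mathcal{D}_i$ and sub-curves of $\mathcal{G}_i$ enclosed by $\mathcal{D}_i$) occupy $k$ adjacent sub-intervals of consecutive punctures. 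Let $B_j \in B_n^+$ be the block-swap exchanging the $j$-th and $(j+1)$-th of these sub-intervals and fixing everything else; this is a positive simple element of $B_n$ whose $\mathcal{D}_i$-component is exactly $\sigma_j$. Now $\sigma_j \preccurlyeq \widetilde{y_{i+1}}$ in $B_k$ says that the $j$-th and $(j+1)$-th sub-objects of $\mathcal{D}_i$ are interchanged by $y_{i+1}$, so every pair of their strands crosses in the simple braid $y_{i+1}$; reassembling these crossings gives $B_j \preccurlyeq y_{i+1}$ in $B_n$. Dually, $\sigma_j \preccurlyeq \partial(\widetilde{y_i})$ means that $\widetilde{y_i}\,\sigma_j$ is simple in $B_k$, which translates to $y_i$ not having swapped those same two sub-objects of $\mathcal{D}_i$; hence $y_i B_j$ is simple in $B_n$, giving $B_j \preccurlyeq \partial(y_i)$. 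Combining, $B_j \preccurlyeq \partial(y_i) \wedge y_{i+1}$, contradicting the fact that $y_i y_{i+1}$ is in left normal form.

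Once the decomposition is in left normal form, the first factor of $y_{[\mathcal{C}\in\mathcal{F}]}$ is $\widetilde{y_1} = (y_1)_{[\mathcal{C}\in\mathcal{F}]} = (y \wedge \Delta_n)_{[\mathcal{C}\in\mathcal{F}]}$. Since the first factor of the left normal form of a positive braid $z$ equals $z \wedge \Delta$, this yields $y_{[\mathcal{C}\in\mathcal{F}]} \wedge \Delta_k = (y \wedge \Delta_n)_{[\mathcal{C}\in\mathcal{F}]}$, as required. The main obstacle is to make the dictionary between $\sigma_j \in B_k$ and the block-swap $B_j \in B_n$ completely precise: one has to unwind the standardizer in the definition of components, identifying the permutation of $\widetilde{y_i}$ with the way $y_i$ permutes the sub-objects of $\mathcal{D}_{i-1}$, and verify rigorously that the block-swap of two adjacent intervals of consecutive punctures really is a simple element of $B_n$ whose prefix relations in $B_n$ match those of $\sigma_j$ in $B_k$.
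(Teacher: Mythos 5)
The paper does not actually prove this lemma: it is imported verbatim from Lee--Lee (\cite[Lemma 3.4 (x)]{LL}), and the paper then deduces Lemma~\ref{L:lnf_round_curves} from it as an ``immediate consequence''. Your argument runs in the opposite direction: you prove the stronger normal-form statement directly --- that the componentwise factors $\widetilde{y_1}\cdots\widetilde{y_r}$ form a positive left normal form of $y_{[\mathcal C\in \mathcal F]}$, which is essentially Lemma~\ref{L:lnf_round_curves} in the positive case --- and then read off the claimed identity from the first factor. This is sound and not circular. The key step, $\partial(\widetilde{y_i})\wedge\widetilde{y_{i+1}}=1$, is handled correctly by the block-swap device: for permutation braids divisibility is governed by inversion (crossing) sets, and roundness of the intermediate families at both ends of each factor makes crossings between two blocks an all-or-nothing matter, so a common generator prefix $\sigma_j$ in $B_k$ lifts to the block transposition $B_j\preccurlyeq \partial(y_i)\wedge y_{i+1}$ in $B_n$, contradicting left-weightedness of $y_iy_{i+1}$; note the same argument forces any trivial factors $\widetilde{y_i}$ to propagate to the end, so the decomposition genuinely is a positive left normal form and its first factor equals the meet with $\Delta_k$. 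What your route buys is a self-contained proof replacing the citation (and it yields Lemma~\ref{L:lnf_round_curves} along the way); what it relies on --- well-definedness and multiplicativity of components for positive braids preserving roundness, and the inversion-set criterion for $\preccurlyeq$ among simple elements --- is standard, and the paper itself uses the former informally (e.g.\ in the proof of Lemma~\ref{L:gcd_round_curves}), so your closing caveat about making the dictionary precise is exactly where the remaining, routine work lies. One reading convention worth fixing explicitly: $k$ must be understood as the number of strands of the component (punctures directly enclosed by $\mathcal C$ plus one per collapsed inner curve of $\mathcal F$), which is how you use it and how the $\Delta_k$ in the statement has to be interpreted.
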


Notice that given a positive braid $y$, the braid $y\wedge \Delta$ is the first factor in its left normal form unless $\Delta\preccurlyeq y$, in which case $y\wedge \Delta= \Delta$. Some authors prefer to define the left normal form of a positive braid $y$ as $y_1\cdots y_r$, where $y_i=(y_i\cdots y_r)\wedge \Delta$ for $i=1,\ldots,r$. This allows some of the leftmost factors to be equal to $\Delta$, while in the definition given in Section~\ref{S:Garside} the powers of $\Delta$ are not considered as factors of the normal form. Actually, with this latter definition one can take $r$ to be greater than the supremum of $y$, just by allowing some of the rightmost factors to be trivial. With this definition $y\wedge \Delta$ is the first factor in the left normal form of $y$, $\inf(y)$ is the number of $\Delta$ factors in the left normal form, and the canonical length of $y$ is the number of proper (nontrivial and non-$\Delta$) factors in the left normal form. We will call this kind of decomposition the {\bf positive} left normal form of $y$, which is uniquely defined up to adding some trivial factors at the end. The following result is an immediate consequence of Lemma~\ref{L:Lee_Lee_Lemma}.

\begin{lemma}\label{L:lnf_round_curves}
Let~$y\in B_n^+$, and let~$\mathcal F$ be a family of curves such that $[\mathcal F]$ and $[\mathcal F]^y$ are round. Let $y_1\cdots y_r$ be the positive left normal form of $y$. Let $[\mathcal C]\in [\mathcal F]$. For $i=1,\ldots,r$, denote~$[\mathcal C_i]= [\mathcal C]^{y_1\cdots y_{i-1}}$ and $[\mathcal F_i]=[\mathcal F]^{y_1\cdots y_{i-1}}$. Then the positive left normal form of $y_{[\mathcal C\in \mathcal F]}$ is precisely
$$
{y_1}_{[\mathcal C_1\in \mathcal F_1]}\: {y_2}_{[\mathcal C_2\in \mathcal F_2]} \cdots {y_r}_{[\mathcal C_r\in \mathcal F_r]}.
$$
\end{lemma}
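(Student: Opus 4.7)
The plan is to reduce the lemma to Lemma~\ref{L:Lee_Lee_Lemma} via two preliminary facts: that each ${y_i}_{[\mathcal C_i\in\mathcal F_i]}$ is well defined, and that the component operation is multiplicative on products whose intermediate families are round.

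First I would verify well-definedness. Since $[\mathcal F]$ and $[\mathcal F]^y$ are round and $y_1\cdots y_r$ is a positive braid with these boundary data, Theorem~\ref{T:BGN} applied to each curve of $\mathcal F$ tells us that every intermediate family $[\mathcal F_i]=[\mathcal F]^{y_1\cdots y_{i-1}}$ is round. In particular $[\mathcal F_i]$ and $[\mathcal F_i]^{y_i}=[\mathcal F_{i+1}]$ are both round, so each factor ${y_i}_{[\mathcal C_i\in\mathcal F_i]}$ is a component in the sense of the paragraph preceding this lemma (the ``roundness on both sides'' setting).

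Second, I would establish multiplicativity: whenever $uv$ is a positive braid and $[\mathcal F]$, $[\mathcal F]^u$, $[\mathcal F]^{uv}$ are all round, one has
\[
(uv)_{[\mathcal C\in\mathcal F]} \;=\; u_{[\mathcal C\in\mathcal F]}\cdot v_{[\mathcal C^u\in\mathcal F^u]}.
\]
This is essentially a diagrammatic statement about subbraids. Picking the subset $I\subset\{1,\ldots,n\}$ associated to $\mathcal C$ in $\mathcal F$ (one puncture inside each inner round curve plus the free punctures of $D_{\mathcal C}$), the strands of $uv$ starting at $I$ are precisely the concatenation of the strands of $u$ starting at $I$ with the strands of $v$ starting at $u(I)$; the normalising partial-braid elements $\alpha_I$ and $\alpha_{uv(I)}^{-1}$ telescope with the corresponding $\alpha_{u(I)}$ in the middle. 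Iterating, $y_{[\mathcal C\in\mathcal F]}$ equals the product $\prod_i {y_i}_{[\mathcal C_i\in\mathcal F_i]}$ displayed in the statement. I expect this to be the main technical obstacle, but it is really just unwinding the subbraid definition for round curves.

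Third, I would check that this product is in positive left normal form, i.e.\ that for each $i$, the first factor in the normal form of ${y_i}_{[\mathcal C_i\in\mathcal F_i]}\cdots {y_r}_{[\mathcal C_r\in\mathcal F_r]}$ is ${y_i}_{[\mathcal C_i\in\mathcal F_i]}$. By multiplicativity, the full product in question is $(y_i\cdots y_r)_{[\mathcal C_i\in\mathcal F_i]}$. Applying Lemma~\ref{L:Lee_Lee_Lemma} to the positive braid $y_i\cdots y_r$ (whose input and output families $\mathcal F_i$ and $\mathcal F_{r+1}=\mathcal F^y$ are round) with $k_i$ the number of punctures enclosed by $\mathcal C_i$, one gets
\[
(y_i\cdots y_r)_{[\mathcal C_i\in\mathcal F_i]}\wedge \Delta_{k_i} \;=\; \bigl((y_i\cdots y_r)\wedge \Delta_n\bigr)_{[\mathcal C_i\in\mathcal F_i]} \;=\; {y_i}_{[\mathcal C_i\in\mathcal F_i]},
\]
where the last equality uses the defining property $y_i=(y_i\cdots y_r)\wedge\Delta_n$ of the positive left normal form. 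This is exactly the condition required to conclude that ${y_1}_{[\mathcal C_1\in\mathcal F_1]}\cdots {y_r}_{[\mathcal C_r\in\mathcal F_r]}$ is the positive left normal form of $y_{[\mathcal C\in\mathcal F]}$, finishing the proof.
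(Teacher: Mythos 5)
Your proof is correct and follows exactly the route the paper intends: the paper states this lemma as an immediate consequence of Lemma~\ref{L:Lee_Lee_Lemma}, and your argument is precisely that deduction, applying the Lee--Lee identity to each tail $y_i\cdots y_r$ to verify left-weightedness. The two ingredients you make explicit (roundness of the intermediate families via Theorem~\ref{T:BGN} and multiplicativity of taking components along round families) are facts the paper uses implicitly elsewhere, so your write-up just fills in the bookkeeping the author left to the reader.
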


In the case in which $y\in B_n$ is not positive, we just need to make a slight modification of the above result.

\begin{lemma}\label{L:lnf_round_curves_non_positive}
Let~$y\in B_n$, and let~$\mathcal F$ be a family of curves such that $[\mathcal F]$ and $[\mathcal F]^y$ are round. Let $\Delta^p y_1\cdots y_r$ be the left normal form of $y$. Let $[\mathcal C]\in [\mathcal F]$ enclosing $k$ strands. For $i=1,\ldots,r$, denote~$[\mathcal C_i]= [\mathcal C]^{\Delta^p y_1\cdots y_{i-1}}$ and $[\mathcal F_i]=[\mathcal F]^{\Delta^p y_1\cdots y_{i-1}}$. Then $(\Delta_k)^{-p}y_{[\mathcal C\in \mathcal F]}$ is a positive braid, and
its positive left normal form is precisely
$$
{y_1}_{[\mathcal C_1\in \mathcal F_1]}\: {y_2}_{[\mathcal C_2\in \mathcal F_2]} \cdots {y_r}_{[\mathcal C_r\in \mathcal F_r]}.
$$
\end{lemma}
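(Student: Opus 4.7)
The plan is to reduce to the positive case, already handled by Lemma~\ref{L:lnf_round_curves}, by peeling the factor $\Delta^p$ off the left. First I would establish (or recall) the following multiplicativity property of the component operation: if $\alpha,\beta\in B_n$ and $\mathcal F$, $[\mathcal F]^\alpha$, $[\mathcal F]^{\alpha\beta}$ are all families of round curves, then
$$
(\alpha\beta)_{[\mathcal C\in\mathcal F]} \;=\; \alpha_{[\mathcal C\in\mathcal F]}\cdot \beta_{[\mathcal C^\alpha\in\mathcal F^\alpha]}.
$$
This follows directly from the description of the subbraid as the element of the braid groupoid obtained by keeping the chosen set of strands, together with the fact that such subbraids respect concatenation once one tracks the inserted standardizers at the middle point.

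Next, I would verify that $(\Delta^p)_{[\mathcal C\in\mathcal F]}=(\Delta_k)^p$ whenever $\mathcal C$ encloses $k$ punctures. Since $\Delta^{\pm 1}$ preserves roundness of every curve, applying the multiplicativity repeatedly reduces this to the cases $p=\pm 1$. For $p=1$ I would use that $\Delta$ acts on $D_n$ as a global half-twist, so it sends the round sub-disc $D_{\mathcal C}$ bijectively onto $D_{\mathcal C^\Delta}$ by a half-twist on $k$ strands, which after passing through the standardizers $\alpha_I$ and $\alpha_{\Delta(I)}^{-1}$ becomes precisely $\Delta_k$. The case $p=-1$ is symmetric.

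Combining these two observations and applying them to $y=\Delta^p\cdot y_1\cdots y_r$, using Theorem~\ref{T:BGN} to ensure that every intermediate family $[\mathcal F]^{\Delta^p y_1\cdots y_i}$ is round, I obtain
$$
y_{[\mathcal C\in\mathcal F]} \;=\; (\Delta_k)^p\cdot (y_1)_{[\mathcal C_1\in\mathcal F_1]}\cdot (y_2)_{[\mathcal C_2\in\mathcal F_2]}\cdots (y_r)_{[\mathcal C_r\in\mathcal F_r]}.
$$
Multiplying on the left by $(\Delta_k)^{-p}$ yields a product of positive braids, and is therefore positive. To identify this product as the \emph{positive} left normal form, I would apply Lemma~\ref{L:lnf_round_curves} to the positive braid $y_1\cdots y_r$ (whose positive left normal form is itself, since each $y_i$ is a proper simple factor by the hypothesis that $\Delta^p y_1\cdots y_r$ is the left normal form of $y$) and to the family $[\mathcal F]^{\Delta^p}=\mathcal F_1$, noting that the intermediate curves and families prescribed by that lemma coincide with the $\mathcal C_i$ and $\mathcal F_i$ of the statement.

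The main obstacle I expect is the careful verification of $(\Delta)_{[\mathcal C\in\mathcal F]}=\Delta_k$ from the algebraic definition of subbraid via the groupoid element $\widetilde\beta_I$ and the standardizers. The geometric picture is transparent, but the argument requires tracking how the permutation of punctures induced by the half-twist interacts with the normalization carried out by $\alpha_I$ and $\alpha_{\Delta(I)}^{-1}$. Once this identification is in place, the rest of the argument is routine bookkeeping layered on top of Lemma~\ref{L:lnf_round_curves}.
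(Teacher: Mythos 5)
Your proposal is correct and follows essentially the same route as the paper: peel the power of $\Delta$ off the left, observe that its component with respect to the round curve is the corresponding power of $\Delta_k$ (the paper asserts this directly, using that subbraids of $\Delta$ are again half twists), use multiplicativity of components over the round intermediate family, and then invoke Lemma~\ref{L:lnf_round_curves} for the positive braid $y_1\cdots y_r$ with respect to $\mathcal F_1=[\mathcal F]^{\Delta^p}$. The only difference is presentational: the paper works with $\Delta^{-p}y$ and applies the multiplicativity once, while you expand all factors before citing the positive lemma.
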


\begin{proof}
We have $[\mathcal C]^{\Delta^{p}}=[\mathcal C_1]\in [\mathcal F_1]$ or equivalently $[\mathcal C_1]^{\Delta^{-p}}=[\mathcal C]$, and the component of $\Delta^{-p}$ associated to $\mathcal C_1$ is $(\Delta^{-p})_{[\mathcal C_1\in \mathcal F_1]}=\Delta_k^{-p}$.

Now consider the braid $\Delta^{-p}y=y_1\cdots y_r$. It is a positive braid such that $[\mathcal F_1]^{y_1\cdots y_r}=[\mathcal F]^y$ is round, and its positive left normal form is precisely $y_1\cdots y_r$. Its component with respect to $\mathcal C_1$ will also be positive, and it is equal to  $(\Delta^{-p}y)_{[\mathcal C_1\in \mathcal F_1]} = (\Delta^{-p})_{[\mathcal C_1\in \mathcal F_1]}\: y_{[\mathcal C\in \mathcal F]} = (\Delta_k)^{-p}y_{[\mathcal C\in \mathcal F]}$, hence the first claim is shown.  Since the above component is precisely $(y_1\cdots y_r)_{[\mathcal C_1\in F_1]}$, its positive left normal form is given in Lemma~\ref{L:lnf_round_curves}, and this finishes the proof.
\end{proof}

\begin{corollary}\label{C:initial_and_final_factors}
With the above notations, one has:
\begin{enumerate}

 \item $\inf(y)\leq \inf(y_{[\mathcal C\in \mathcal F]})\ $ and $\ \sup(y)\geq \sup(y_{[\mathcal C\in \mathcal F]})$.

 \item If $\ \inf(y)< \inf(y_{[\mathcal C\in \mathcal F]})\ $ then $\ \iota(y)_{[\mathcal C\in \mathcal F]}=\Delta$.

 \item If $\ \inf(y)= \inf(y_{[\mathcal C\in \mathcal F]})\ $ then $\ \iota(y)_{[\mathcal C\in \mathcal F]}=\iota(y_{[\mathcal C\in \mathcal F]})$.

 \item If $\ \sup(y)> \sup(y_{[\mathcal C\in \mathcal F]})\ $ then $\ (\varphi(y))_{[\mathcal C_r\in \mathcal F_r]}=1$.

 \item If $\ \sup(y)= \sup(y_{[\mathcal C\in \mathcal F]})\ $ then $\ (\varphi(y))_{[\mathcal C_r\in \mathcal F_r]}=\varphi(y_{[\mathcal C\in \mathcal F]})$.

\end{enumerate}
\end{corollary}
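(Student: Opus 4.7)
The plan is to read off all five statements from the positive left normal form provided by Lemma~\ref{L:lnf_round_curves_non_positive}, namely
\[
(\Delta_k)^{-p}\,y_{[\mathcal C\in\mathcal F]} \ =\ z_1 \cdots z_r, \qquad z_i:={y_i}_{[\mathcal C_i\in\mathcal F_i]}.
\]
As recalled just before Lemma~\ref{L:lnf_round_curves}, in any positive left normal form the only factors that can fail to be proper simple elements are leading $\Delta_k$'s, which get absorbed into the $\Delta_k$-power of the ordinary normal form, and trailing trivial factors, which are padding beyond the supremum. Let $a$ be the number of leading $z_i$ equal to $\Delta_k$ and $b$ the number of trailing trivial $z_i$. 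Then the ordinary left normal form of $y_{[\mathcal C\in\mathcal F]}$ is $\Delta_k^{p+a}\,z_{a+1}\cdots z_{r-b}$, whence
\[
\inf(y_{[\mathcal C\in\mathcal F]})=p+a\geq p=\inf(y) \quad\text{and}\quad \sup(y_{[\mathcal C\in\mathcal F]})=p+r-b\leq p+r=\sup(y),
\]
proving item (1) and identifying items (2) and (4) with the cases $a>0$ and $b>0$ respectively.

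For items (2) and (3), assume $r>0$ (if $r=0$ then $y=\Delta^p$ and both items are trivial or vacuous). The key step is the identity
\[
\iota(y)_{[\mathcal C\in\mathcal F]}\ =\ \Delta_k^{p}\cdot{y_1}_{[\mathcal C_1\in\mathcal F_1]}\cdot\Delta_k^{-p}.
\]
Since $\iota(y)=\tau^{-p}(y_1)=\Delta^p y_1\Delta^{-p}$, I would first check that $\iota(y)$ sends the round family $[\mathcal F]$ to a round family, using that $\Delta^2$ is central and therefore acts trivially on isotopy classes of simple closed curves in $D_n$: this yields $[\mathcal F]^{\Delta^{-p}}=[\mathcal F]^{\Delta^{p}}=[\mathcal F_1]$, hence $[\mathcal F]^{\iota(y)}=[\mathcal F_2]^{\Delta^{-p}}$, round. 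With this in hand, multiplicativity of the subbraid assignment along a product (which follows directly from the groupoid definition in Section~\ref{S:reducible}) gives
\[
\iota(y)_{[\mathcal C\in\mathcal F]}=(\Delta^p)_{[\mathcal C\in\mathcal F]}\cdot(y_1)_{[\mathcal C_1\in\mathcal F_1]}\cdot(\Delta^{-p})_{[\mathcal C_2\in\mathcal F_2]},
\]
and each $\Delta^{\pm p}$ factor contributes $\Delta_k^{\pm p}$, by the observation used at the start of the proof of Lemma~\ref{L:lnf_round_curves_non_positive}. If $a>0$ then ${y_1}_{[\mathcal C_1\in\mathcal F_1]}=\Delta_k$, which is fixed by the conjugation, so $\iota(y)_{[\mathcal C\in\mathcal F]}=\Delta_k$, proving (2); if $a=0$ then the ordinary normal form recorded above gives $\iota(y_{[\mathcal C\in\mathcal F]})=\Delta_k^p\,z_1\,\Delta_k^{-p}$, which matches the right-hand side of the identity, proving (3).

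Items (4) and (5) follow by direct inspection. When $r>0$, $\varphi(y)=y_r$, so $(\varphi(y))_{[\mathcal C_r\in\mathcal F_r]}={y_r}_{[\mathcal C_r\in\mathcal F_r]}=z_r$: if $b>0$ then $z_r=1$, giving (4); if $b=0$ then $z_r$ equals $\varphi(y_{[\mathcal C\in\mathcal F]})$---the last proper factor in the generic subcase, or $\Delta_k$ in the degenerate subcase $a=r$ where $y_{[\mathcal C\in\mathcal F]}$ is a pure $\Delta_k$-power---giving (5). The remaining case $r=0$ reduces to $y=\Delta^p$ and $y_{[\mathcal C\in\mathcal F]}=\Delta_k^p$; taking $\mathcal C_r:=\mathcal C$, $\mathcal F_r:=\mathcal F$ by convention, (4) is vacuous and (5) is the trivial identity $\Delta_k=\Delta_k$.

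The main obstacle is the identity in paragraph two: it is the only step that goes beyond unpacking the positive normal form given by Lemma~\ref{L:lnf_round_curves_non_positive}. Its proof combines the centrality of $\Delta^2$ (which ensures the intermediate families $[\mathcal F]$, $[\mathcal F_1]$, $[\mathcal F_2]$, $[\mathcal F_2]^{\Delta^{-p}}$ are round, so each factor subbraid is a genuine component) with the multiplicativity of the subbraid assignment along a product. Once this is settled, the corollary becomes a direct case analysis on the integers $a$ and $b$.
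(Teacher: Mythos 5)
Your proposal is correct and follows essentially the same route as the paper: both rest on the positive normal form of the component from Lemma~\ref{L:lnf_round_curves_non_positive} (your $a$/$b$ count of leading $\Delta_k$'s and trailing trivial factors just makes the paper's ``immediate'' step explicit), and both identify the roundness of $[\mathcal F]^{\iota(y)}$ --- via Theorem~\ref{T:BGN} plus the fact that $\Delta^{\pm 1}$ preserves roundness --- as the one nontrivial point needed to make $\iota(y)_{[\mathcal C\in\mathcal F]}=\Delta_k^{p}\,{y_1}_{[\mathcal C_1\in\mathcal F_1]}\,\Delta_k^{-p}$ well defined. No gaps; your extra remarks on the centrality of $\Delta^2$ and the multiplicativity of components are consistent with how the paper itself uses these facts.
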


\begin{proof}
The only nontrivial fact to notice is that $\iota(y)$ sends $\mathcal F$ to a family of round curves. Indeed, if $\iota(y)=1$ there is nothing to prove (in this case both $y$ and $y_{[\mathcal C\in \mathcal F]}$ are powers of $\Delta$). If $\iota(y)\neq 1$ and $\inf(y)=p$, Theorem~\ref{T:BGN} implies the roundness of $[\mathcal F]^{\Delta^p y_{1}} = [\mathcal F]^{\iota(y)\Delta^{p}}$. Applying $\Delta^{-p}$ to this family of curves we obtain again a round family, hence $[\mathcal F]^{\iota(y)}$ is round, as we wanted to show. This implies that the braid $\iota(y)_{[\mathcal C\in \mathcal F]}$ in conditions 2 and 3 is well defined. The rest is immediate from Lemma~\ref{L:lnf_round_curves_non_positive}.
\end{proof}

Once shown that restrictions to components preserve left normal forms, we can show that they also preserve greatest common divisors.

\begin{lemma}\label{L:gcd_round_curves}
Let $x,y\in B_n$, and let~$\mathcal F$ be a family of curves such that $[\mathcal F]$, $[\mathcal F]^x$ and $[\mathcal F]^y$ are round. Then $[\mathcal F]^{x\wedge y}$ is also round, and for every $\mathcal C\in \mathcal F$, one has $(x\wedge y)_{[\mathcal C \in \mathcal F]}= x_{[\mathcal C\in \mathcal F]}\wedge y_{[\mathcal C\in \mathcal F]}$.
\end{lemma}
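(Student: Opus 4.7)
The plan is to work in two layers: first reduce to the case where $x,y\in B_n^+$ by multiplying by a common negative power of $\Delta$, and then, in the positive case, argue by strong induction on $\sup(x\wedge y)$, peeling off the first factor of the positive left normal form of $x\wedge y$ one step at a time. The crux throughout is a simple-element version of the statement.

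For the reduction, set $p=\min\{\inf(x),\inf(y)\}$ and $X=\Delta^{-p}x$, $Y=\Delta^{-p}y$. Both are positive, and left-invariance of $\preccurlyeq$ gives $x\wedge y=\Delta^{p}(X\wedge Y)$. Since $\Delta$ preserves the roundness of every family and $(\Delta^{p})_{[\mathcal C\in\mathcal F]}=\Delta_{k}^{p}$ by direct inspection, both the roundness assertion and the component formula for $(x,y)$ relative to $\mathcal F$ transfer to the same assertions for $(X,Y)$ relative to the round family $[\mathcal F]^{\Delta^{p}}$.

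In the positive case, let $x_1, y_1$ be the first factors of the positive left normal forms of $x,y$ and set $a=x_1\wedge y_1$. The lattice identity $(x\wedge y)\wedge\Delta=(x\wedge\Delta)\wedge(y\wedge\Delta)=x_1\wedge y_1=a$ shows that $a$ is the first factor of the positive left normal form of $z:=x\wedge y$, so $a\neq 1$ whenever $z\neq 1$. Granting the simple case (below), $[\mathcal F]^a$ is round and $a_{[\mathcal C\in\mathcal F]}=(x_1)_{[\mathcal C\in\mathcal F]}\wedge(y_1)_{[\mathcal C\in\mathcal F]}$. Setting $\mathcal G=[\mathcal F]^a$, both $a^{-1}x$ and $a^{-1}y$ are positive, $[\mathcal G]^{a^{-1}x}=[\mathcal F]^x$ and $[\mathcal G]^{a^{-1}y}=[\mathcal F]^y$ are round, and $(a^{-1}x)\wedge(a^{-1}y)=a^{-1}z$ has supremum $\sup(z)-1$. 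The inductive hypothesis applied to $(a^{-1}x, a^{-1}y)$ relative to $\mathcal G$ yields the roundness of $[\mathcal G]^{a^{-1}z}$ (whence $[\mathcal F]^z$ is round) and the component formula for $a^{-1}z$; combined with the formula for $a$ (using that components respect products of round-preserving braids), this gives the component formula for $z$. The base case $z=1$ follows by the same arguments together with Lemma~\ref{L:Lee_Lee_Lemma}: the simple case forces $(x_1)_{[\mathcal C\in\mathcal F]}\wedge(y_1)_{[\mathcal C\in\mathcal F]}=1$, which via Lemma~\ref{L:Lee_Lee_Lemma} gives $x_{[\mathcal C\in\mathcal F]}\wedge y_{[\mathcal C\in\mathcal F]}\wedge\Delta_k=1$ and hence $x_{[\mathcal C\in\mathcal F]}\wedge y_{[\mathcal C\in\mathcal F]}=1$.

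The main obstacle is the simple case: for simple $s,t$ with $[\mathcal F]^{s}, [\mathcal F]^{t}$ round, one must show $[\mathcal F]^{s\wedge t}$ is round and $(s\wedge t)_{[\mathcal C\in\mathcal F]}=s_{[\mathcal C\in\mathcal F]}\wedge t_{[\mathcal C\in\mathcal F]}$. I would approach this via the permutation representation $B_n\to S_n$: simple braids correspond bijectively to elements of $S_n$; the Garside order on simple elements corresponds to the right weak Bruhat order; and simple braids sending $\mathcal F$ to a round family correspond to permutations sending each puncture-block induced by $\mathcal F$ to a contiguous set of punctures. One then needs the combinatorial fact that these ``block-respecting'' permutations form a sublattice of the weak Bruhat order and that the induced ``restriction-to-a-block'' map is a lattice homomorphism; this can be checked either directly or by exploiting the interaction between the weak order and parabolic quotients of $S_n$. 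An alternative is to extract the needed statement directly from the detailed lattice-theoretic analysis of components and meets in Lee and Lee~\cite{LL}.
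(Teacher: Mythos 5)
Your outer machinery is fine: the reduction to positive braids, the identity $(x\wedge y)\wedge\Delta=(x\wedge\Delta)\wedge(y\wedge\Delta)$, and the induction peeling off the first factor $a=x_1\wedge y_1$ (using Theorem~\ref{T:BGN} to know $[\mathcal F]^{x_1},[\mathcal F]^{y_1}$ are round, multiplicativity of components, and Lemma~\ref{L:Lee_Lee_Lemma} in the base case) all work. But the proof is not complete, and the missing piece is exactly what you yourself call the main obstacle: the simple-element case. Your induction does not reduce the difficulty of the lemma, it only concentrates it there --- the claim that ``block-respecting'' permutation braids are closed under $\wedge$ is precisely the restriction to simple elements of the roundness statement (the paper gets the general roundness statement from \cite[Theorem 4.2]{LL}), and the claim that restriction to a block is a $\wedge$-homomorphism on these permutations is precisely the lemma itself for simple elements. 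Asserting that this ``can be checked either directly or by exploiting parabolic quotients,'' or that it can be ``extracted from \cite{LL},'' is a deferral, not an argument; as it stands the crux of the lemma is unproved.

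For comparison, the paper avoids normal forms entirely. It quotes \cite[Theorem 4.2]{LL} for the roundness of $[\mathcal F]^{x\wedge y}$, reduces to positive braids via central powers of $\Delta^2$, and then proves the component identity by two inequalities: the inequality $(x\wedge y)_{[\mathcal C\in\mathcal F]}\preccurlyeq x_{[\mathcal C\in\mathcal F]}\wedge y_{[\mathcal C\in\mathcal F]}$ because subbraids of a positive prefix are prefixes of the corresponding subbraids, and the reverse inequality by a lifting construction: any positive prefix $z$ of $x_{[\mathcal C\in\mathcal F]}$ is realized as the $\mathcal C$-component of a braid $\overline z$ whose other components are trivial, and $\overline z\preccurlyeq x$ is verified by building the complementary braid $x'$ componentwise and observing it is positive; applying this to a prefix $z$ of $x_{[\mathcal C\in\mathcal F]}\wedge y_{[\mathcal C\in\mathcal F]}$ gives $\overline z\preccurlyeq x\wedge y$ and hence $z\preccurlyeq (x\wedge y)_{[\mathcal C\in\mathcal F]}$. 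If you want to salvage your route, you must actually prove your simple case (e.g.\ by a genuine combinatorial argument in $S_n$ with the weak order, including the correspondence between roundness-preservation and the block condition); alternatively, replace the whole induction by a lifting argument of the paper's type, which handles all canonical lengths at once.
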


\begin{proof}
The first claim is shown be Lee and Lee~\cite[Theorem 4.2]{LL}. Now notice that multiplying~$x$ and~$y$ by any power of~$\Delta^2$ we change neither the hypothesis nor the thesis of the result, hence we can assume that~$x$ and~$y$ are positive braids.

As $x\wedge y$ is a prefix of $x$ and $y$, every subbraid of $x\wedge y$ is a prefix of the corresponding subbraids of $x$ and $y$. Hence $(x\wedge y)_{[\mathcal C \in \mathcal F]}\preccurlyeq x_{[\mathcal C\in \mathcal F]}$ and $(x\wedge y)_{[\mathcal C\in \mathcal F]}\preccurlyeq y_{[\mathcal C\in \mathcal F]}$, so one has $(x\wedge y)_{[\mathcal C \in \mathcal F]}\preccurlyeq x_{[\mathcal C\in \mathcal F]}\wedge y_{[\mathcal C\in \mathcal F]}$.

On the other hand, let~$k$ be the number of punctures enclosed by~$\mathcal C$, and suppose that a braid~$z\in B_k$ is a prefix of $x_{[\mathcal C\in \mathcal F]}$. Then there is a (unique) braid~$\overline z$ on~$n$ strands which sends~$[\mathcal F]$ to a family of round curves, say $[\mathcal G]$, such that $\overline z_{[\mathcal C\in \mathcal F]}=z$ and all other components are trivial. We will now see that $\overline z\preccurlyeq x$, that is,  $(\overline z)^{-1}x$ is positive. Indeed, we can define a braid $x'\in B_n$ which sends $[\mathcal G]$ to a family of round curves, by defining its components with respect to $\mathcal G$. For every $\mathcal D\in \mathcal F$, we have $[\mathcal D]^{\overline z}=[\mathcal D']\in \mathcal G$. If $\mathcal D\neq \mathcal C$, we define $x'_{[\mathcal D'\in \mathcal G]}=x_{[\mathcal D\in \mathcal F]}$. Finally we define $x'_{[\mathcal C'\in \mathcal G]}=z^{-1}(x_{[\mathcal C\in \mathcal F]})$, which is a positive braid. Since $[\mathcal F]^{\overline z}=[\mathcal G]$ and $[\mathcal G]^{x'}$ is round, when we multiply $\overline z$ and $x'$ we are merely multiplying their components, hence $\overline z \: x'= x$. As all components of $x'$ are positive, $x'$ is positive, therefore $\overline z\preccurlyeq x$.

In the same way, if~$z$ is a prefix of $y_{[\mathcal C\in \mathcal F]}$, we have $\overline z \preccurlyeq y$. Hence, if $z\preccurlyeq x_{[\mathcal C\in \mathcal F]}\wedge y_{[\mathcal C\in \mathcal F]}$, one has $\overline z \preccurlyeq x\wedge y$, and then $z=\overline z_{[\mathcal C\in \mathcal F]} \preccurlyeq (x\wedge y)_{[\mathcal C \in \mathcal F]}$. Therefore, $x_{[\mathcal C\in \mathcal F]}\wedge y_{[\mathcal C\in \mathcal F]}\preccurlyeq (x\wedge y)_{[\mathcal C \in \mathcal F]}$, and the equality holds.
\end{proof}

We can finally describe the preferred prefix of a braid $y$ preserving a family of round curves $\mathcal F$. Notice that we cannot assume only that $y$ preserves the roundness of $\mathcal F$, as the preferred prefix of $y$ involves also the components of $y^{-1}$, and they will be related to the components of $y$ in an appropriate way only if they are considered with respect to the same family. This occurs when $\mathcal F$ is preserved by $y$.

\begin{proposition}\label{P:preferred_prefix_round_curves}
Let $y\in B_n$, and let~$\mathcal F$ be a family of round curves such that $[\mathcal F]^y=[\mathcal F]$. Let $\mathcal C, \mathcal C'\in \mathcal F\cup\{\partial (D)\}$ be such that $[\mathcal C']^y=[\mathcal C]$. Then one has:
\begin{enumerate}

 \item If $\inf(y_{[\mathcal C\in \mathcal F]})=\inf(y)$ and $\sup(y_{[\mathcal C'\in \mathcal F]})=\sup(y)$ then
 $$
 \mathfrak p(y)_{[\mathcal C\in \mathcal F]} = \iota(y_{[\mathcal C\in\mathcal F]})\wedge \iota((y^{-1})_{[\mathcal C\in \mathcal F]})
 = \iota(y_{[\mathcal C\in\mathcal F]})\wedge \iota((y_{[\mathcal C'\in \mathcal F]})^{-1}).
 $$

 \item If $\inf(y_{[\mathcal C\in \mathcal F]})>\inf(y)$ and $\sup(y_{[\mathcal C'\in \mathcal F]})=\sup(y)$ then
 $
 \mathfrak p(y)_{[\mathcal C\in \mathcal F]}= \iota((y_{[\mathcal C'\in \mathcal F]})^{-1}).
 $

 \item If $\inf(y_{[\mathcal C\in \mathcal F]})=\inf(y)$ and $\sup(y_{[\mathcal C'\in \mathcal F]})<\sup(y)$ then
 $
 \mathfrak p(y)_{[\mathcal C\in \mathcal F]}=\iota(y_{[\mathcal C\in\mathcal F]}).
 $

 \item If $\inf(y_{[\mathcal C\in \mathcal F]})>\inf(y)$ and $\sup(y_{[\mathcal C'\in \mathcal F]})<\sup(y)$ then
 $
 \mathfrak p(y)_{[\mathcal C\in \mathcal F]}=\Delta.
 $

\end{enumerate}
\end{proposition}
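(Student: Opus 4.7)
The plan is to reduce the computation of $\mathfrak p(y)_{[\mathcal C\in \mathcal F]}$ to the already established machinery for initial factors of $y$ and $y^{-1}$, via the gcd lemma for components. First, I would note the following translation: since $\mathfrak p(y) = \iota(y) \wedge \iota(y^{-1})$, and since Lemma~\ref{L:gcd_round_curves} states that taking components commutes with $\wedge$ whenever all curves stay round, it suffices to show that $[\mathcal F]$ is sent to round families by both $\iota(y)$ and $\iota(y^{-1})$, and then to compute $\iota(y)_{[\mathcal C\in \mathcal F]}$ and $\iota(y^{-1})_{[\mathcal C\in \mathcal F]}$ separately. The roundness of $[\mathcal F]^{\iota(y)}$ is precisely the observation already used in the proof of Corollary~\ref{C:initial_and_final_factors}, and the argument for $\iota(y^{-1})$ is identical applied to $y^{-1}$ (using that $[\mathcal F]^{y^{-1}} = [\mathcal F]$).

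Next, I would invoke Corollary~\ref{C:initial_and_final_factors} directly to evaluate $\iota(y)_{[\mathcal C\in \mathcal F]}$: it equals $\iota(y_{[\mathcal C\in \mathcal F]})$ when $\inf(y_{[\mathcal C\in\mathcal F]}) = \inf(y)$, and equals $\Delta$ otherwise. To handle the $\iota(y^{-1})$ side, the key auxiliary fact is the identity
\[
(y^{-1})_{[\mathcal C\in\mathcal F]} = (y_{[\mathcal C'\in \mathcal F]})^{-1},
\]
which follows from the definition of the component when $\mathcal F$ and $[\mathcal F]^y$ are round (the subbraid construction is contravariant in the obvious way, since inverting $y$ inverts the motion of the strands inside each tube, and $y^{-1}$ carries $\mathcal C$ to $\mathcal C'$). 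Combined with the elementary identities $\inf(y^{-1}) = -\sup(y)$ and $\sup(z^{-1}) = -\inf(z)$, this translates the hypothesis $\sup(y_{[\mathcal C'\in \mathcal F]}) = \sup(y)$ into $\inf((y^{-1})_{[\mathcal C\in\mathcal F]}) = \inf(y^{-1})$, so Corollary~\ref{C:initial_and_final_factors} applied to $y^{-1}$ yields $\iota(y^{-1})_{[\mathcal C\in\mathcal F]} = \iota((y_{[\mathcal C'\in\mathcal F]})^{-1})$ in that case, and $\Delta$ otherwise.

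Finally, I would combine these two evaluations according to the four case splits in the statement. In case~(1) both initial factors restrict to their natural counterparts, and the conclusion is exactly the meet formula given. In case~(2) the $\iota(y)$ side becomes $\Delta$, which is the top of the lattice, so the meet reduces to $\iota((y_{[\mathcal C'\in \mathcal F]})^{-1})$; symmetrically for case~(3). In case~(4) both restrictions are $\Delta$, and their meet is $\Delta$. Each case then reads off the displayed formula.

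I expect the main obstacle to be a careful justification of the identity $(y^{-1})_{[\mathcal C\in\mathcal F]} = (y_{[\mathcal C'\in\mathcal F]})^{-1}$, since the definition of a component goes through a subbraid after possible standardization; one must check that in the round case (where no standardizer is needed) taking inverses in $B_n$ really does restrict to taking inverses of the subbraid on the tracked strands, and that the indexing of $\mathcal C$ versus $\mathcal C'$ matches the direction in which $y$ moves the curves. Once this is in place, everything else is a purely formal bookkeeping of infima, suprema, and the two cases of Corollary~\ref{C:initial_and_final_factors}.
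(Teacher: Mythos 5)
Your proposal is correct and follows the same skeleton as the paper's proof: split $\mathfrak p(y)_{[\mathcal C\in\mathcal F]}$ as $\iota(y)_{[\mathcal C\in\mathcal F]}\wedge\iota(y^{-1})_{[\mathcal C\in\mathcal F]}$ via Lemma~\ref{L:gcd_round_curves}, evaluate the first factor by Corollary~\ref{C:initial_and_final_factors}, and use the identity $(y^{-1})_{[\mathcal C\in\mathcal F]}=(y_{[\mathcal C'\in\mathcal F]})^{-1}$, which the paper also asserts at the outset. The only divergence is in the second factor: you apply the initial-factor cases of Corollary~\ref{C:initial_and_final_factors} directly to $y^{-1}$, translating the hypothesis on $\sup(y_{[\mathcal C'\in\mathcal F]})$ via $\inf(z^{-1})=-\sup(z)$, whereas the paper applies the final-factor cases of the same corollary to $y$ at the curve $\mathcal C'$ and then converts via $\iota(y^{-1})=\partial(\varphi(y))$ and the fact that restriction to a component sends $\Delta$ to $\Delta_k$. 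The two routes are dual and rest on the same standard facts about normal forms of inverses, so neither buys anything substantial over the other; your version has the small merit of making explicit the roundness of $[\mathcal F]^{\iota(y)}$ and $[\mathcal F]^{\iota(y^{-1})}$ needed to invoke Lemma~\ref{L:gcd_round_curves}, and you correctly flag the identification of $(y^{-1})_{[\mathcal C\in\mathcal F]}$ with $(y_{[\mathcal C'\in\mathcal F]})^{-1}$ as the point requiring care, a point the paper states without proof.
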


\begin{proof}
First notice that $(y^{-1})_{[\mathcal C\in \mathcal F]}= (y_{[\mathcal C'\in \mathcal F]})^{-1}$, so the second equality in Condition 1 holds. Now by Lemma~\ref{L:gcd_round_curves}, we know that
$$
\mathfrak p(y)_{[\mathcal C\in \mathcal F]}=(\iota(y)\wedge \iota(y^{-1}))_{[\mathcal C\in \mathcal F]} = \iota(y)_{[\mathcal C\in \mathcal F]}\wedge \iota(y^{-1})_{[\mathcal C\in \mathcal F]}.
$$
Corollary~\ref{C:initial_and_final_factors} tells us that $\iota(y)_{[\mathcal C\in \mathcal F]}$ is equal either to $\iota(y_{[\mathcal C\in \mathcal F]})$ or to $\Delta$, depending whether $\inf(y_{[\mathcal C\in \mathcal F]})$ is equal to $\inf(y)$ or not.  We will also see that $\iota(y^{-1})_{[\mathcal C\in \mathcal F]}$ is equal either to $\iota((y_{[\mathcal C'\in \mathcal F]})^{-1})$ or to $\Delta$, depending whether $\sup(y_{[\mathcal C'\in \mathcal F]})$ is equal to $\sup(y)$ or not, and the result will be shown.

Applying Corollary~\ref{C:initial_and_final_factors} to the curve $\mathcal C'$, it follows that
$\varphi(y)_{[\mathcal C'_r\in \mathcal F_r]}$ is equal either to $\varphi(y_{[\mathcal C'\in \mathcal F]})$ or to the trivial braid, depending whether $\sup(y_{[\mathcal C'\in \mathcal F]})$ is equal to $\sup(y)$ or not, where $\mathcal C'_r$ is a curve such that $[\mathcal C'_r]^{\varphi(y)}=[\mathcal C]$. Now just recall that $\iota(y^{-1})=\partial(\varphi(y))$, that is, $\varphi(y)\iota(y^{-1})=\Delta$. Hence
$$
\Delta_k = \Delta_{[\mathcal C'_r\in \mathcal F]} = (\varphi(y)\iota(y^{-1}))_{[\mathcal C'_r\in \mathcal F_r]} = \varphi(y)_{[\mathcal C'_r\in \mathcal F_r]}\iota(y^{-1})_{[\mathcal C\in \mathcal F]}.
$$
In other words, $\partial(\varphi(y)_{[\mathcal C'_r\in \mathcal F_r]}) = \iota(y^{-1})_{[\mathcal C\in \mathcal F]}$. This implies that if $\varphi(y)_{[\mathcal C'_r\in \mathcal F_r]}$ is trivial, one has $\iota(y^{-1})_{[\mathcal C\in \mathcal F]}=\Delta$, while if $\varphi(y)_{[\mathcal C'_r\in \mathcal F_r]}=\varphi(y_{[\mathcal C'\in \mathcal F]})$ one has $\iota(y^{-1})_{[\mathcal C\in \mathcal F]}=\partial(\varphi(y_{[\mathcal C'\in \mathcal F]}))= \iota((y_{[\mathcal C'\in \mathcal F]})^{-1})$, as we wanted to show.
\end{proof}

If $[\mathcal C]=[\mathcal C']$ in the above statement, the first condition means that restriction preserves preferred prefixes. In this case, we can just restate it in the following way:

\begin{proposition}\label{P:preferred_prefix_fixed_round_curves}
Let $y\in B_n$, and let~$\mathcal F$ be a family of round curves such that $[\mathcal F]^y=[\mathcal F]$. Let $\mathcal C\in \mathcal F$ such that $[\mathcal C]^y=[\mathcal C]$. Then one has:
\begin{enumerate}

 \item If $\inf(y_{[\mathcal C\in \mathcal F]})=\inf(y)$ and $\sup(y_{[\mathcal C\in \mathcal F]})=\sup(y)$ then
 $
 \mathfrak p(y)_{[\mathcal C\in \mathcal F]}=\mathfrak p(y_{[\mathcal C\in \mathcal F]}).
 $

 \item If $\inf(y_{[\mathcal C\in \mathcal F]})>\inf(y)$ and $\sup(y_{[\mathcal C\in \mathcal F]})=\sup(y)$ then
 $
 \mathfrak p(y)_{[\mathcal C\in \mathcal F]}= \iota(y^{-1}_{[\mathcal C\in \mathcal F]}).
 $

 \item If $\inf(y_{[\mathcal C\in \mathcal F]})=\inf(y)$ and $\sup(y_{[\mathcal C\in \mathcal F]})<\sup(y)$ then
 $
 \mathfrak p(y)_{[\mathcal C\in \mathcal F]}=\iota(y_{[\mathcal C\in\mathcal F]}).
 $

 \item If $\inf(y_{[\mathcal C\in \mathcal F]})>\inf(y)$ and $\sup(y_{[\mathcal C\in \mathcal F]})<\sup(y)$ then
 $
 \mathfrak p(y)_{[\mathcal C\in \mathcal F]}=\Delta.
 $

\end{enumerate}
\end{proposition}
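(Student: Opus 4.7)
The plan is to deduce Proposition~\ref{P:preferred_prefix_fixed_round_curves} as an immediate specialization of Proposition~\ref{P:preferred_prefix_round_curves}. The hypothesis $[\mathcal C]^y = [\mathcal C]$ is exactly the hypothesis $[\mathcal C']^y=[\mathcal C]$ of the previous proposition with the choice $\mathcal C' = \mathcal C$. So I will simply invoke that proposition with this choice and then simplify each of the four conclusions.

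For cases 2, 3, and 4, the simplification is purely notational: substituting $\mathcal C'=\mathcal C$ into the formulas from the previous proposition gives directly the formulas claimed here. (In case 2, $\iota((y_{[\mathcal C'\in \mathcal F]})^{-1})$ becomes $\iota((y_{[\mathcal C\in\mathcal F]})^{-1})$, which I write as $\iota(y^{-1}_{[\mathcal C\in\mathcal F]})$; case 3 and case 4 are literal substitutions.) The only case needing a tiny extra step is case 1. There the previous proposition yields
\[
 \mathfrak p(y)_{[\mathcal C\in \mathcal F]} = \iota(y_{[\mathcal C\in\mathcal F]})\wedge \iota((y_{[\mathcal C\in \mathcal F]})^{-1}),
\]
and I would close the argument by recalling that, by the very definition of the preferred prefix (applied now inside the smaller braid group acting on the $k$ punctures enclosed by $\mathcal C$), the right-hand side equals $\mathfrak p(y_{[\mathcal C\in\mathcal F]})$.

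The only point that really needs checking is the internal consistency of applying the definition of $\mathfrak p$ to the component: namely that $y_{[\mathcal C\in\mathcal F]}$ genuinely is an element of a braid group (here $B_k$ with its own Garside element $\Delta_k$), that its inverse is well defined, and that the identity $(y^{-1})_{[\mathcal C\in\mathcal F]} = (y_{[\mathcal C\in\mathcal F]})^{-1}$ used implicitly at the end of Proposition~\ref{P:preferred_prefix_round_curves} holds when $\mathcal C$ is fixed by $y$. This last point is where the hypothesis $[\mathcal C]^y=[\mathcal C]$ is essential; without it one cannot even write $\mathfrak p(y_{[\mathcal C\in\mathcal F]})$, since the domain and codomain curves would differ. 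Once these bookkeeping checks are made, there is no real obstacle: the result is a corollary of the previous proposition, and the proof is essentially one line per case.
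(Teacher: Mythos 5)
Your proposal is correct and matches the paper exactly: the paper presents this proposition as the specialization of Proposition~\ref{P:preferred_prefix_round_curves} to the case $[\mathcal C']=[\mathcal C]$, with the first case simplifying to $\mathfrak p(y_{[\mathcal C\in\mathcal F]})$ by the definition of the preferred prefix, and the remark following the statement makes precisely your bookkeeping point that $(y^{-1})_{[\mathcal C\in\mathcal F]}=(y_{[\mathcal C\in\mathcal F]})^{-1}$ because $[\mathcal C]$ is preserved by $y$.
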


Notice that in the above statement the term $y^{-1}_{[\mathcal C\in \mathcal F]}$ is unambiguous, since $(y^{-1})_{[\mathcal C\in \mathcal F]}=(y_{[\mathcal C\in \mathcal F]})^{-1}$ as $[\mathcal C]$ is preserved by $y$.

\section{Big sets of cyclic slidings}\label{S:big_sets}

Having identified the preferred prefix of a reducible braid (with round reduction curves) in terms of its components, we observe the following facts. Suppose that we apply a cyclic sliding to a non-periodic, reducible braid $y$. Then, if a component of $y$ has the same infimum and supremum as $y$, we have applied a {\it cyclic sliding} to this component. If it has the same infimum but distinct supremum, we have applied a {\it cycling} to this component. If it has the same supremum but distinct infimum, we have applied a cycling to its inverse, which corresponds to apply a {\it decycling} followed by $\tau$ (conjugation by $\Delta$). Finally, if both the infimum and supremum of the component are distinct from those of $y$, we are merely conjugating this component by $\Delta$ (applying $\tau$).

From these facts it follows that if $y$ is reducible, applying iterated cyclic sliding to $y$ does not necessarily simplify each of its components. It is then quite easy to give examples of sets of sliding circuits which are exponential with respect to the number of strands. But there are already examples of this kind; see for instance~\cite{GG1} or~\cite{Prasolov}. Nevertheless, using this technique we will be able to find families of examples whose set of sliding circuits is exponential both in the number of strands and in the length of the braid. This is the first kind of examples having the second feature, to our knowledge.

In order to construct our example, we recall from~\cite{BGG3} that the braid $\delta = \sigma_{1}\sigma_{2}\cdots \sigma_{k-1}\in B_k$ has exactly $2^{k-2}$ conjugates which are simple (infimum zero and canonical length one). The precise permutation of each of these simple elements is given in~\cite[Proposition 10]{BGG3}. It is immediate to deduce that these $2^{k-2}$ elements are precisely the braids of the form:
$$
  \delta_{d_1,\ldots,d_m}=
   (\sigma_{d_1-1} \sigma_{d_1-2}\cdots \sigma_1) (\sigma_{d_2-1}\sigma_{d_2-2}\cdots \sigma_{d_1})\cdots (\sigma_{n-1}\sigma_{n-2}\cdots \sigma_{d_{m}}),
$$
for any choice of integers $1<d_1<d_2<\cdots<d_m<n$. Since they are exactly in one to one correspondence with the subsets of $\{2,\ldots,n-1\}$, this is why they are precisely $2^{n-2}$. We will also define $1<u_1<u_2<\cdots<u_k<n$ such that $\{u_1,\ldots,u_k\}=\{2,\ldots,n-1\}\backslash \{d_1,\ldots,d_m\}$.

Given a positive braid $\alpha\in B_n$, we will denote $S(\alpha)=\{\sigma_i\ ; \ \sigma_i\preccurlyeq \alpha\}$ and $F(\alpha)=\{\sigma_i\ ; \ \alpha \succcurlyeq \sigma_i\}$, the starting and finishing sets of $\alpha$, respectively.

\begin{lemma}\label{L:SF} Given $\delta_{d_1,\ldots,d_m}$, denote $u_0=d_0=1$. One has:
\begin{enumerate}
\item $S(\delta_{d_1,\ldots,d_m})=\{\sigma_{u_i} \ ; \ u_{i}+1\neq u_{i+1} \}$.

\item $F(\delta_{d_1,\ldots,d_m})=\{\sigma_{d_i} \ ; \ d_{i}+1\neq d_{i+1} \}$.
\end{enumerate}
In particular, $S(\delta_{d_1,\ldots,d_m})\cap F(\delta_{d_1,\ldots,d_m})=\emptyset$.
\end{lemma}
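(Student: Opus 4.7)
The plan is to reduce the lemma to a computation of the permutation $\pi_\delta \in S_n$ induced by $\delta = \delta_{d_1,\ldots,d_m}$, and then invoke the standard descent criterion for simple braids: if $\alpha$ is a divisor of $\Delta$, then $\sigma_i \in S(\alpha)$ iff $\pi_\alpha(i) > \pi_\alpha(i+1)$, and $\sigma_i \in F(\alpha)$ iff $\pi_\alpha^{-1}(i) > \pi_\alpha^{-1}(i+1)$. The criterion applies since $\delta_{d_1,\ldots,d_m}$ is simple, as recalled just before the lemma. Once $\pi_\delta$ is known, both parts of the lemma and the final disjointness will follow from a finite case analysis.

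To compute $\pi_\delta$, I would write $\delta = B_1 B_2 \cdots B_{m+1}$ with $B_j = \sigma_{d_j-1}\sigma_{d_j-2}\cdots\sigma_{d_{j-1}}$, using the conventions $d_0=1$ and $d_{m+1}=n$. Each block $B_j$ acts on positions as the cyclic shift of the interval $[d_{j-1},d_j]$: $l\mapsto l+1$ for $d_{j-1}\leq l<d_j$, $d_j\mapsto d_{j-1}$, while fixing the rest. Composing these shifts in order, I would establish: (a) $\pi_\delta(d_j)=d_{j-1}$ for each $j\in\{1,\ldots,m+1\}$, since $B_1,\ldots,B_{j-1}$ fix $d_j$, $B_j$ moves it to $d_{j-1}$, and every later block acts on an interval whose bottom is $\geq d_j>d_{j-1}$; and (b) for each $l\in\{0,\ldots,k\}$, $\pi_\delta(u_l)$ equals the first non-$d$-value strictly greater than $u_l$, namely $u_{l+1}$ if $l<k$ and $n$ if $l=k$. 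The proof of (b) is a \emph{cascade} argument: the block containing $u_l$ shifts the strand to $u_l+1$; if this is again a $d$-value, the next block shifts it to $u_l+2$, and the process continues through any maximal run of consecutive $d$-values, halting as soon as the strand reaches a non-$d$ integer.

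With $\pi_\delta$ in hand, Part~1 falls out from the descent criterion. For $\sigma_{u_l}$: if $u_l+1=u_{l+1}$ is itself a $u$-value, then $\pi_\delta(u_l+1)$ (equal to $u_{l+2}$ or $n$) is strictly greater than $\pi_\delta(u_l)=u_{l+1}$, so $\sigma_{u_l}\notin S(\delta)$; otherwise $u_l+1$ is a $d$-value or equals $n$, in which case $\pi_\delta(u_l+1)$ is strictly smaller than $\pi_\delta(u_l)$ and $\sigma_{u_l}\in S(\delta)$. A parallel check shows $\sigma_{d_j}\notin S(\delta)$ for every $j\geq 1$. Part~2 follows by applying the same analysis to $\pi_\delta^{-1}$, whose values ($\pi_\delta^{-1}(d_j)=d_{j+1}$, $\pi_\delta^{-1}(u_l)=u_{l-1}$ for $l\geq 1$, $\pi_\delta^{-1}(1)=d_1$, $\pi_\delta^{-1}(n)=u_k$) are obtained by inversion. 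The disjointness $S(\delta)\cap F(\delta)=\emptyset$ is then immediate: for $i\geq 2$, the formulas place $i$ in the disjoint sets $\{u_1,\ldots,u_k\}$ and $\{d_1,\ldots,d_m\}$ respectively; and for $i=1=u_0=d_0$, the criteria reduce to $d_1=2$ (for $\sigma_1\in S(\delta)$) and $d_1\neq 2$ (for $\sigma_1\in F(\delta)$), which are mutually exclusive.

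The main technical point will be justifying the cascade in (b): I need to show that once the strand halts at a non-$d$ integer $q$, every subsequent block fixes it. The last block $B_{j^*}$ that moved the strand sends it to $q\leq d_{j^*}$; since $q$ is not a $d$-value (otherwise the cascade would have continued), in fact $q<d_{j^*}$, and then monotonicity of the sequence $d_{j^*}<d_{j^*+1}<\cdots$ ensures $q$ lies strictly below the bottom of every later block's interval.
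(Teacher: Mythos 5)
Your proposal is correct and follows essentially the same route as the paper: both rest on the descent criterion for simple braids applied to the permutation of $\delta_{d_1,\ldots,d_m}$, which is the single cycle $(1\ u_1\ u_2\cdots u_k\ n\ d_m\cdots d_1)$, followed by the same case analysis and the same observation that $\sigma_1$ is the only candidate for $S\cap F$. The only cosmetic differences are that you derive this permutation directly from the block decomposition via your cascade argument, where the paper simply cites \cite{BGG3}, and that you treat $F$ via the codescent criterion for $\pi^{-1}$ rather than via the reversal $F(x)=S(\overleftarrow{x})$ with $\overleftarrow{\delta_{d_1,\ldots,d_m}}=\delta_{u_1,\ldots,u_k}$ --- two formulations of the same fact.
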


\begin{proof}
From~\cite{BGG3}, the permutation associated to $\delta_{d_1,\ldots,d_m}$ is given by the single cycle $\pi=(1\ u_1\ u_2\cdots u_k \ n\ d_m\ d_{m-1}\cdots d_1)$. As $\delta_{d_1,\ldots,d_m}$ is a simple braid, we recall that its starting set is given by the generators $\sigma_i$ such that $\pi(i)>\pi(i+1)$. Suppose that $i=d_j$ for some $j$. Then $\pi(i)<i$. In this case, if $i+1=d_{j+1}$ or $i+1=n$ one has $\pi(i+1)=d_j=i>\pi(i)$, and if $i+1=u_k$ for some $k$, one has $\pi(i+1)>i+1>\pi(i)$. In either case $\sigma_i\neq S(\delta_{d_1,\ldots,d_m})$. Suppose now that $i=u_k$ for some $k$. Then $\pi(i)>i$. In this case, if $i+1=u_{k+1}$ one has $\pi(i)=i+1<\pi(i+1)$, so $\sigma_i\neq S(\delta_{d_1,\ldots,d_m})$. But if $i+1=d_j$ for some $j$ or $i+1=n$, one has $\pi(i)>i\geq \pi(i+1)$, so $\sigma_i\in S(\delta_{d_1,\ldots,d_m})$. The first claim is then shown.

The second claim follows from the first one, once we notice that for every positive braid $x$ one has $F(x)=S(\overleftarrow{x})$, where $\overleftarrow{x}$ is the braid obtained from any positive word representing $x$, read backwards (or the image of $x$ under the anti-isomorphism of $B_n$ which sends each $\sigma_i$ to itself). Since $\overleftarrow{\delta_{d_1,\ldots,d_m}}=\delta_{u_1,\ldots,u_k}$, the second claim follows.

Finally, the only possible element in $S(\delta_{d_1,\ldots,d_m})\cap F(\delta_{d_1,\ldots,d_m})$ is $\sigma_1$, but $\sigma_1$ belongs either to $S(\delta_{d_1,\ldots,d_m})$ or to $F(\delta_{d_1,\ldots,d_m})$ depending whether $2= d_1$ or $2=u_1$. Since both properties are mutually exclusive, the intersection is empty.
\end{proof}

We still need to define some other special elements for our example:

\begin{lemma}
For every $i,j\in \{1,\ldots,n-1\}$ there is a simple braid $\alpha_{i,j}$ such that $S(\alpha)=\{\sigma_i\}$ and $F(\alpha)=\{\sigma_j\}$.
\end{lemma}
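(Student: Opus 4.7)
The plan is to exhibit $\alpha_{i,j}$ explicitly as a short monotone product of consecutive generators. For $i=j$ I would take $\alpha_{i,i}=\sigma_i$, whose starting and finishing sets are both trivially $\{\sigma_i\}$. For $i\neq j$ I would define
\[
\alpha_{i,j}=\sigma_i\sigma_{i\pm 1}\sigma_{i\pm 2}\cdots \sigma_j,
\]
reading the indices with step $+1$ if $i<j$ and step $-1$ if $i>j$. Each such word has $|i-j|+1$ letters drawn from a chain of consecutive generators, and is a positive subword of $\Delta$; a direct count shows that the number of inversions of its induced permutation equals its word length, so $\alpha_{i,j}$ is already represented in reduced form as a positive prefix of $\Delta$, i.e.\ it is simple.

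Next I would read off the starting set using the standard criterion $\sigma_k\in S(\alpha) \iff \pi(k)>\pi(k+1)$, exactly as in the proof of Lemma~\ref{L:SF}. A direct tracking of the strands through the monotone product shows that the permutation $\pi$ induced by $\alpha_{i,j}$ fixes every index outside the interval spanned by the word, while inside that interval it is a single cyclic shift: the index $i$ is sent all the way to the opposite end, and every other strand in the interval is moved by one step in the opposite direction. The unique descent of $\pi$ therefore occurs at $k=i$, giving $S(\alpha_{i,j})=\{\sigma_i\}$.

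For the finishing set I would invoke the identity $F(\alpha)=S(\overleftarrow{\alpha})$ already used in the proof of Lemma~\ref{L:SF}. The reverse word $\overleftarrow{\alpha_{i,j}}$ is again a monotone product of consecutive generators, but now with first letter $\sigma_j$ and last letter $\sigma_i$, so the very same analysis applied to $\overleftarrow{\alpha_{i,j}}$ yields a permutation with unique descent at $k=j$. This gives $F(\alpha_{i,j})=\{\sigma_j\}$, completing the proof. The argument is essentially a direct construction; the only real work, and a very mild one, is keeping the two symmetric cases $i<j$ and $i>j$ in parallel bookkeeping, with the degenerate case $i=j$ handled separately by the single generator $\sigma_i$.
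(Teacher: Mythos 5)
Your construction is exactly the one in the paper, which simply takes $\alpha_{i,j}=\sigma_i\sigma_{i+1}\cdots\sigma_j$ for $i\leq j$ and $\sigma_i\sigma_{i-1}\cdots\sigma_j$ for $i\geq j$ and leaves the verification to the reader; your permutation/descent bookkeeping correctly fills in that verification. So the proposal is correct and follows essentially the same approach as the paper, only spelled out in more detail.
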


\begin{proof}
It suffices to take, if $i\leq j$, $\alpha=\sigma_i\sigma_{i+1}\cdots \sigma_{j}$, and if $i\geq j$, $\alpha=\sigma_i\sigma_{i-1}\cdots \sigma_j$.
\end{proof}

\begin{lemma}\label{L:x}
Let $i_1,i_2,\ldots,i_{k+1}\in\{1,\ldots,n-1\}$, and let $\eta$ be a simple conjugate of $\delta$ such that $\sigma_{i_1}\in F(\eta)$. Then the element
$$
x_{\eta, i_1,\ldots,i_{k+1}}= (\alpha_{i_1,i_2}\alpha_{i_2,i_3}\cdots \alpha_{i_k,i_{k+1}})^{-1} \eta \;(\alpha_{i_1,i_2}\alpha_{i_2,i_3}\cdots \alpha_{i_k,i_{k+1}})
$$
is a conjugate of $\delta$ whose left normal form is:
$$
   \Delta^{-k}\; \partial^{-2k+1}(\alpha_{i_k,i_{k+1}})\cdots  \partial^{-3}(\alpha_{i_2,i_3}) \;\partial^{-1}(\alpha_{i_1,i_2})  \;\eta\;\alpha_{i_1,i_2}\;\alpha_{i_2,i_3}\cdots \alpha_{i_{k},i_{k+1}}.
$$
\end{lemma}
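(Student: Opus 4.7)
Write $\alpha_j:=\alpha_{i_j,i_{j+1}}$. The braid $x:=x_{\eta,i_1,\ldots,i_{k+1}}$ is manifestly conjugate to $\eta$, hence to $\delta$, so the real content of the lemma is identifying its left normal form. My plan is first to rewrite $(\alpha_1\cdots\alpha_k)^{-1}$ as $\Delta^{-k}$ times a string of simples, and then to verify the local LNF criterion at every junction of the resulting expression. For the rewriting, apply $\alpha_j^{-1}=\Delta^{-1}\partial^{-1}(\alpha_j)$ to each factor and use the commutation rule $s\Delta^{-1}=\Delta^{-1}\tau^{-1}(s)$ to sweep every $\Delta^{-1}$ to the far left; since $\tau=\partial^2$, one has $\tau^{-m}\partial^{-1}=\partial^{-(2m+1)}$, and an easy induction on $k$ yields
\[
(\alpha_1\alpha_2\cdots\alpha_k)^{-1} \;=\; \Delta^{-k}\,\partial^{-(2k-1)}(\alpha_k)\cdots \partial^{-3}(\alpha_2)\,\partial^{-1}(\alpha_1).
\]
Substituting this into the definition of $x$ produces the candidate expression of the statement.

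To confirm that this candidate is indeed the left normal form, first note that every simple factor is proper (neither $1$ nor $\Delta$): $\alpha_j$ and $\eta$ are proper simples by construction, and $\partial^{\pm 1}$ permutes the set of proper simples (it only swaps $1$ with $\Delta$). In particular the first factor of the positive tail, $\partial^{-(2k-1)}(\alpha_k)$, is not $\Delta$, which pins the infimum at exactly $-k$. It remains to verify $S(b)\subseteq F(a)$ at each adjacency $a\cdot b$. The pair $\alpha_j\cdot\alpha_{j+1}$ is immediate since $S(\alpha_{j+1})=\{\sigma_{i_{j+1}}\}=F(\alpha_j)$; the pair $\eta\cdot\alpha_1$ is in LNF by the hypothesis $\sigma_{i_1}\in F(\eta)$; and for the pair $\partial^{-1}(\alpha_1)\cdot\eta$, the identity $F(\partial^{-1}(s))=(S(s))^c$ (which follows from $\partial^{-1}(s)\cdot s=\Delta$) reduces the condition to $\sigma_{i_1}\notin S(\eta)$, which is exactly what Lemma~\ref{L:SF} provides, since $S(\eta)\cap F(\eta)=\emptyset$ and $\sigma_{i_1}\in F(\eta)$.

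The main obstacle is the remaining adjacency $\partial^{-(2j+1)}(\alpha_{j+1})\cdot\partial^{-(2j-1)}(\alpha_j)$ inside the $\partial^{-1}$-chain. The crucial observation is that $\Delta^2$ is central, so $\tau^2=\mathrm{id}$ and hence $\partial^{-2}=\tau^{-1}=\tau$. Writing $\partial^{-(2j+1)}=\partial^{-1}\tau^{-j}$ and $\partial^{-(2j-1)}=\partial^{-1}\tau^{-(j-1)}$, and combining the dual identities $F(\partial^{-1}(s))=(S(s))^c$ and $S(\partial^{-1}(s))=\tau^{-1}((F(s))^c)$ (the latter from $\tau^{-1}(s)\cdot\partial^{-1}(s)=\Delta$) with the coincidence $F(\alpha_j)=S(\alpha_{j+1})=\{\sigma_{i_{j+1}}\}$, a short computation shows that both $F$ of the left factor and $S$ of the right factor equal $\tau^{-j}(\{\sigma_l : l\ne i_{j+1}\})$. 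Hence the required inclusion (actually an equality) holds, completing the LNF verification.
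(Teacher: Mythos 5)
Your proof is correct, and its skeleton is the paper's: conjugacy is immediate, the candidate factorization is exhibited, and left-weightedness is checked at each junction via the criterion $S(b)\subseteq F(a)$ --- the pairs $\alpha_{i_j,i_{j+1}}\cdot\alpha_{i_{j+1},i_{j+2}}$ and $\eta\cdot\alpha_{i_1,i_2}$ exactly as in the paper, and the junction $\partial^{-1}(\alpha_{i_1,i_2})\cdot\eta$ via $F(\partial^{-1}(s))=S(s)^c$ combined with Lemma~\ref{L:SF}. The one place you genuinely diverge is the negative part: the paper observes that $\alpha_{i_1,i_2}\cdots\alpha_{i_k,i_{k+1}}$ is already in left normal form and invokes the formula of~\cite{EM} for the left normal form of an inverse, which makes the adjacencies inside the $\partial^{-1}$-chain automatic; you instead rederive that expression by sweeping the $\Delta^{-1}$'s to the left and then verify those internal adjacencies by hand, using $F(\partial^{-1}(s))=S(s)^c$ and $S(\partial^{-1}(s))=\tau^{-1}\bigl(F(s)^c\bigr)$ (both are instances of the complementarity for simple elements with $ab=\Delta$, where one needs the full equality $S(b)=F(a)^c$, i.e.\ disjointness as well as the union the paper states --- this is standard for permutation braids) to conclude that the finishing set of the left factor and the starting set of the right factor both equal $\tau^{-j}(\{\sigma_l \, ; \, l\neq i_{j+1}\})$. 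In effect you reprove the ElRifai--Morton inverse formula in this special case, which buys a self-contained argument at the cost of a longer computation; note also that the centrality of $\Delta^2$ (i.e.\ $\tau^2=\mathrm{id}$) is not actually needed there, only $\partial^{-2}=\tau^{-1}$.
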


\begin{proof}
The first statement is evident as  $x_{\eta,i_1,\ldots,i_{k+1}}$ is a conjugate of $\eta$, which is a conjugate of $\delta$. Now, it is well known that a product of two simple factors $ab$ is left-weighted if and only if $S(b)\subset F(a)$. Since $F(\alpha_{i_{j-1},i_j})=\{\sigma_{i_j}\}=S(\alpha_{i_j,i_{j+1}})$, the factorization $\alpha_{i_{j-1},i_j}\alpha_{i_j,i_{j+1}}$ is left-weighted for $j=2,\ldots,k$. Also $S(\alpha_{i_1,i_2})=\{\sigma_{i_1}\}\subset F(\eta)$, hence the factorization $\eta\:\alpha_{i_1,i_2}$ is also left-weighted.

If we know the left normal form of a braid, the left normal form of its inverse is also known~\cite{EM}. In this case, since $\alpha_{i_1,i_2}\alpha_{i_2,i_3}\cdots \alpha_{i_k,i_{k+1}}$ is in left normal form as written, it follows that the left normal form of
$(\alpha_{i_1,i_2}\alpha_{i_2,i_3}\cdots \alpha_{i_k,i_{k+1}})^{-1}$  is equal to $  \Delta^{-k} \partial^{-2k+1}(\alpha_{i_k,i_{k+1}})\cdots  \partial^{-3}(\alpha_{i_2,i_3}) \partial^{-1}(\alpha_{i_1,i_2})$.

It only remains to show that $\partial^{-1}(\alpha_{i_1,i_2})\:\eta$ is left-weighted. But if $a$ and $b$ are two simple elements such that $ab=\Delta$, one has $F(a)\cup S(b)=\{1,\ldots,n-1\}$, that is $F(\partial^{-1}(b))=\{1,\ldots,n-1\}\backslash S(a)$. In this case $F(\partial^{-1}(\alpha_{i_1,i_2})) = \{1,\ldots,n-1\}\backslash S(\alpha_{i_1,i_2}) = \{1,\ldots,n-1\}\backslash\{\sigma_{i_1}\}$. On the other hand $\eta$ is a simple conjugate of $\delta$, so Lemma~\ref{L:SF} tells us that $S(\eta)\cap F(\eta)=\emptyset$. Since $\sigma_{i_1}\in F(\eta)$, it follows that $
\sigma_{i_1}\notin S(\eta)$, that is, $S(\eta)\subset \{1,\ldots,n-1\}\backslash \{\sigma_{i_1}\}= F(\partial^{-1}(\alpha_{i_1,i_2}))$, hence $\partial^{-1}(\alpha_{i_1,i_2})\eta$ is left-weighted, as we wanted to show.
\end{proof}

\begin{corollary}\label{C:x}
For every $i_1,\ldots,i_{2k}\in\{1,\ldots,n-1\}$ and every simple conjugate $\eta$ of $\delta$ such that $\sigma_{i_1}\in F(\eta)$, the braid $\Delta^{2k} x_{\eta,i_1,\ldots,i_{2k}}$ is a conjugate of $\delta^{nk+1}$ whose infimum is 1 and whose canonical length is $4k-1$. Moreover $x_{\eta,i_1,\ldots,i_{2k}}=x_{\gamma,j_1,\ldots,j_{2k}}$ if and only if  $\eta=\gamma$ and $(i_1,\ldots,i_{2k})=(j_1,\ldots,j_{2k})$.
\end{corollary}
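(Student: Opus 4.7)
The corollary will follow by a direct application of Lemma~\ref{L:x} with its parameter $k+1$ replaced by $2k$, so that the internal ``$k$'' of Lemma~\ref{L:x} becomes $2k-1$. This yields the left normal form of $x_{\eta,i_1,\ldots,i_{2k}}$ as
$$
\Delta^{-(2k-1)}\; \partial^{-(4k-3)}(\alpha_{i_{2k-1},i_{2k}})\cdots \partial^{-1}(\alpha_{i_1,i_2}) \;\eta\; \alpha_{i_1,i_2}\cdots \alpha_{i_{2k-1},i_{2k}},
$$
with infimum $-(2k-1)$ and canonical length $(2k-1)+1+(2k-1)=4k-1$. First I would observe that, since $\Delta^{2k}$ is central, left multiplication by it simply shifts the $\Delta$-exponent from $-(2k-1)$ to $+1$ without disturbing the left-weighted factorization of the remaining $4k-1$ simple factors. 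This yields at once that $\Delta^{2k}x_{\eta,i_1,\ldots,i_{2k}}$ has infimum $1$ and canonical length $4k-1$.

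Next, for the conjugacy with $\delta^{nk+1}$ I would invoke the classical identity $\delta^n=\Delta^2$ in $B_n$, which gives $\Delta^{2k}=\delta^{nk}$. Since $\delta^{nk}$ is central, for any $g\in B_n$ one has $g^{-1}\delta^{nk+1}g=\delta^{nk}\cdot(g^{-1}\delta g)$. Choosing $g$ so that $g^{-1}\delta g=\eta$ (such a $g$ exists because $\eta$ is conjugate to $\delta$ by hypothesis) gives $\delta^{nk+1}$ conjugate to $\delta^{nk}\eta=\Delta^{2k}\eta$. By construction $x_{\eta,i_1,\ldots,i_{2k}}$ is itself a conjugate of $\eta$, and centrality of $\Delta^{2k}$ allows the same conjugating element to take $\Delta^{2k}\eta$ to $\Delta^{2k}x_{\eta,i_1,\ldots,i_{2k}}$. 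Composing the two conjugations proves the first assertion.

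Finally, the uniqueness follows from uniqueness of the left normal form. If $x_{\eta,i_1,\ldots,i_{2k}}=x_{\gamma,j_1,\ldots,j_{2k}}$, their left normal forms must coincide factor by factor. Comparing the central factor yields $\eta=\gamma$. Comparing the rightmost simple factor gives $\alpha_{i_{2k-1},i_{2k}}=\alpha_{j_{2k-1},j_{2k}}$; since $S(\alpha_{a,b})=\{\sigma_a\}$ and $F(\alpha_{a,b})=\{\sigma_b\}$, one reads off $i_{2k-1}=j_{2k-1}$ and $i_{2k}=j_{2k}$, and iterating leftward along the $\alpha$ factors recovers all index equalities. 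The only substantive input in the whole argument is the verification, already provided by Lemma~\ref{L:x}, that the displayed factorization is genuinely left-weighted; everything else reduces to centrality of $\Delta^{2k}$ and to the identity $\delta^n=\Delta^2$, so no serious obstacle is expected.
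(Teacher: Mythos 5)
Your proposal is correct and follows essentially the same route as the paper: apply Lemma~\ref{L:x} with $2k$ indices to get infimum $-2k+1$ and canonical length $4k-1$, note that left multiplication by the central element $\Delta^{2k}=\delta^{nk}$ shifts only the $\Delta$-exponent (giving infimum $1$) and yields conjugacy of $\Delta^{2k}x_{\eta,i_1,\ldots,i_{2k}}$ with $\Delta^{2k}\delta=\delta^{nk+1}$, and deduce injectivity from uniqueness of the left normal form, reading $\eta$ and the indices off the factors. Your recovery of the indices via $S(\alpha_{a,b})=\{\sigma_a\}$, $F(\alpha_{a,b})=\{\sigma_b\}$ just makes explicit the paper's remark that $\alpha_{i,j}$ determines (and is determined by) its indices.
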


\begin{proof}
We saw in the previous result that $x_{\eta,i_1,\ldots,i_{2k}}$ has infimum $-2k+1$ and canonical length $4k-1$. If we multiply this braid by $\Delta^{2k}$, we will obtain a braid whose infimum is 1 and whose canonical length is still $4k-1$ as stated. Since $x_{\eta,i_1,\ldots,i_{2k}}$ is a conjugate of $\delta$ and $\Delta^2=\delta^n$ is a central element of $B_n$, it follows that $\Delta^{2k} x_{\eta,i_1,\ldots,i_{2k}}$ is a conjugate of $\Delta^{2k}\delta=\delta^{nk+1}$.

Multiplying an element from the left by a power of $\Delta$ does not modify the non-$\Delta$ factors of its left normal form. This implies that the final $2k$ factors of the left normal form of $\Delta^{2k} x_{\eta,i_1,\ldots,i_{2k}}$ are precisely $\eta\alpha_{i_1,i_2}\alpha_{i_2,i_3}\cdots \alpha_{i_{2k-1},i_{2k}}$. As the element $\alpha_{i,j}$ is uniquely determined by the indices $i$ and $j$, it follows that the braid $\Delta^{2k} x_{\eta,i_1,\ldots,i_{2k}}$ is uniquely determined by $\eta$ and by the indices $(i_1,\ldots,i_{2k})$, as we wanted to show.
\end{proof}

We already have all the ingredients to provide a family of elements whose set of sliding circuits is exponential both on the number of strands and on the canonical length.

\begin{proposition}For $n\geq 3$ and $k\geq 1$, consider the braid $\beta\in B_{n+2}$ given by:
$$
\beta= (\sigma_1\cdots \sigma_{n-1})^{nk+1}\:(\sigma_{n+1})^{4k+1}.
$$
Then $\ell(\beta)=4k+1$ and $\#(SC(\beta))\geq 2^{n-2}(n-1)^{2k-1}$.
\end{proposition}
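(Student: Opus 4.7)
The plan is to exhibit an explicit family of conjugates of $\beta$, indexed by the admissible tuples $(\eta,i_1,\ldots,i_{2k})$ from Corollary~\ref{C:x}, and to show each one lies in a sliding circuit by computing its preferred prefix via Proposition~\ref{P:preferred_prefix_fixed_round_curves}.

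First I set up the geometry: $\beta$ preserves curve-wise the round family $\mathcal F=\{\mathcal C_1,\mathcal C_2\}$, where $\mathcal C_1$ encloses strands $1,\ldots,n$ and $\mathcal C_2$ encloses strands $n+1,n+2$, with components $\beta_{[\mathcal C_1\in\mathcal F]}=\delta^{nk+1}$ and $\beta_{[\mathcal C_2\in\mathcal F]}=\Delta_2^{4k+1}$ and trivial outer component. Using $\delta^n=\Delta_n^2$ together with $S(\Delta_n)=F(\Delta_n)=\{\sigma_1,\ldots,\sigma_{n-1}\}$ in $B_{n+2}$, the braid $\delta^{nk+1}=\Delta_n^{2k}\delta$ has $(\Delta_n)^{2k}\cdot\delta$ as positive left normal form in $B_{n+2}$; absorbing one copy of $\sigma_{n+1}$ (which commutes with $B_n$) into each of these $2k+1$ factors and leaving the remaining $2k$ copies yields
$$\beta=(\Delta_n\sigma_{n+1})^{2k}\cdot(\delta\sigma_{n+1})\cdot\sigma_{n+1}^{2k},$$
with $\inf(\beta)=0$, $\sup(\beta)=4k+1$ and $\ell(\beta)=4k+1$.

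Next, for each admissible tuple set $\gamma=\gamma_{\eta,i_1,\ldots,i_{2k}}:=(\Delta_n^{2k}x_{\eta,i_1,\ldots,i_{2k}})\,\sigma_{n+1}^{4k+1}$. By Corollary~\ref{C:x}, $\Delta_n^{2k}x_{\eta,\ldots}=g^{-1}\delta^{nk+1}g$ for some $g\in B_n\subset B_{n+2}$ which commutes with $\sigma_{n+1}^{4k+1}$, so $\gamma=g^{-1}\beta g$ is a conjugate of $\beta$. The same absorption argument produces a left normal form of $\gamma$ of length $4k+1$ with $\inf(\gamma)=0$ and $\sup(\gamma)=4k+1$, and components $\Delta_n^{2k}x_{\eta,\ldots}$ (of infimum $1$ and supremum $4k$ in $B_n$), $\Delta_2^{4k+1}$, and the trivial braid. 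These three components fall respectively into cases (4), (2) and (3) of Proposition~\ref{P:preferred_prefix_fixed_round_curves}; noting that $\iota(\Delta_2^{-(4k+1)})=1$ (its left normal form contains no proper simple factor), one gets $\mathfrak p(\gamma)_{[\mathcal C_1]}=\Delta_n$ and trivial components elsewhere. By Lee--Lee uniqueness, the braid in $B_{n+2}$ with these components is $\Delta_n\in B_n\subset B_{n+2}$, so $\mathfrak p(\gamma)=\Delta_n$. Since $\Delta_n$ commutes with $\sigma_{n+1}^{4k+1}$ and $\Delta_n^{2k}$ is central in $B_n$,
$$\mathfrak s(\gamma_{\eta,\ldots})=\Delta_n^{-1}\gamma\,\Delta_n=\Delta_n^{2k}\,\tau_n(x_{\eta,i_1,\ldots,i_{2k}})\,\sigma_{n+1}^{4k+1},$$
and since $\tau_n(\sigma_i)=\sigma_{n-i}$ sends $\alpha_{i,j}\mapsto\alpha_{n-i,n-j}$, maps simple conjugates of $\delta$ to simple conjugates of $\delta$, and satisfies $F(\tau_n(\eta))=\tau_n(F(\eta))$, we obtain $\mathfrak s(\gamma_{\eta,i_1,\ldots,i_{2k}})=\gamma_{\tau_n(\eta),\,n-i_1,\ldots,\,n-i_{2k}}$, still of the required form. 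Since $\tau_n^2=\mathrm{id}$ on $B_n$, $\mathfrak s^2(\gamma)=\gamma$, so every $\gamma_{\eta,\ldots}$ lies in $SC(\beta)$.

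Finally, pairwise distinctness of the $\gamma$'s follows from the uniqueness clause of Corollary~\ref{C:x}. There are $2^{n-2}$ choices of $\eta$, and by Lemma~\ref{L:SF} the set $F(\eta)$ is nonempty, providing at least one valid $i_1$; the remaining $i_2,\ldots,i_{2k}$ range freely in $\{1,\ldots,n-1\}$, giving $(n-1)^{2k-1}$ further choices, whence $\#SC(\beta)\geq 2^{n-2}(n-1)^{2k-1}$. The main technical obstacle is the computation of the preferred prefix: one must correctly determine the infima and suprema of the three components of $\gamma$, match them to the cases of Proposition~\ref{P:preferred_prefix_fixed_round_curves}, and in particular handle the central-power component $\gamma_{[\mathcal C_2]}=\Delta_2^{4k+1}$, whose positive left normal form has no proper simple factor.
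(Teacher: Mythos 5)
Your proposal is correct and follows essentially the same route as the paper: the same family of conjugates $\Delta_n^{2k}\,x_{\eta,i_1,\ldots,i_{2k}}\,\sigma_{n+1}^{4k+1}$ obtained from Corollary~\ref{C:x}, the same argument that $\mathfrak s$ acts on this family via $\tau_n$ so that $\mathfrak s^2$ fixes each element, and the same count $2^{n-2}(n-1)^{2k-1}$. The only minor difference is that you derive $\mathfrak p(\gamma)=\Delta_n$ component-wise from Proposition~\ref{P:preferred_prefix_fixed_round_curves} (cases (4), (2), (3)) together with the fact that a braid preserving the round family is determined by its components, whereas the paper computes it directly from the explicit left normal form as $\partial(\sigma_{n+1})\wedge(\Delta_n\sigma_{n+1})=\Delta_n$ and only remarks the correspondence with case (4).
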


\begin{proof}
We remark that $SC(\beta)$ is much bigger than $2^{n-2}(n-1)^{2k-1}$, but we chose this bound for the simplicity of the proof, as it is already exponential on the braid index ($n+2$) and on the canonical length $(4k+1)$, as we shall see.

Denote $\delta=\sigma_1\cdots \sigma_{n-1}$, so $\beta=\delta^{nk+1}\:(\sigma_{n+1})^{4k+1}$. We recall that as $\delta^n=\Delta_n^2$ one has $\delta^{nk+1}=\Delta_n^{2k}\delta$, so the left normal form of $\beta$ is $(\Delta_n\sigma_{n+1})^{2k}(\delta \sigma_{n+1})(\sigma_{n+1})^{2k}$, which is the left-weighted product of $4k+1$ simple elements (the parenthesized terms correspond to simple factors). Hence $\inf(\beta)=0$ and $\ell(\beta)=4k+1$. Notice also that this braid is non-periodic and reducible: it is non-periodic as it is written as a word in which $\sigma_{n}$ does not appear, so the same must happen to every power of $\beta$, hence no power of $\beta$ can be equal to a power of $\Delta_{n+2}$. It is reducible since it preserves curve-wise the family of circles $\mathcal F=\{\mathcal C_{1,n},\mathcal C_{n+1,n+2}\}$. Actually $CRS(\beta)=\mathcal F$ since $\beta_{[\partial(D_n)\in \mathcal F]}$ is trivial, $\beta_{[\mathcal C_{1,n}\in \mathcal F]}= \delta^{nk+1}$ is periodic and nontrivial, and $\beta_{[\mathcal C_{n+1,n+2}\in F]}=\sigma_1^{4k+1}$ is also periodic and nontrivial, but we will not make use of this fact.

Now, for every simple conjugate $\eta$ of $\delta$ in $B_n$, we can choose an index $i_{\eta}$ such that $\sigma_{i_\eta}\in F(\eta)$. Then for every $i_2,\ldots,i_{2k}\in \{1,\ldots,n-1\}$, denote $x=(\Delta_n)^{2k} x_{\eta,i_\eta,i_2,\ldots,i_{2k}}$ and $y=x\:(\sigma_{n+1})^{4k+1}$. We saw in Corollary~\ref{C:x} that $x$ is a conjugate of $\delta^{nk+1}$ in $B_n$, so there exists $\gamma\in B_n$ such that $\gamma^{-1}x\gamma = \delta^{2k+1}$. We can consider $\gamma$ as a braid in $B_{n+2}$, by the standard embedding $B_n\rightarrow B_{n+2}$ that sends $\sigma_i\in B_n$ to $\sigma_i\in B_{n+2}$ (this corresponds to adding two vertical strands to the right of each braid). Then in $B_{n+2}$ one has $\gamma^{-1}y\gamma = \gamma^{-1}(x (\sigma_{n+1})^{4k+1})\gamma = \delta^{nk+1}(\sigma_{n+1})^{4k+1}=\beta$. Hence $y$ is conjugate to our original braid $\beta$.

We will now show that $y$ belongs to a sliding circuit. By Corollary~\ref{C:x}, $x\in B_n$ has infimum $1$ and canonical length $4k-1$, so its left normal form is $\Delta_n s_1\cdots s_{4k-1}$ for some simple elements $s_1,\ldots,s_{4k-1}$ (which are specified in Corollary~\ref{C:x}). The left normal form of $y=x\:(\sigma_{n+1})^{4k+1}\in B_{n+2}$ is then $(\Delta_n \sigma_{n+1})(s_1\sigma_{n+1})\cdots (s_{4k-1}\sigma_{n+1})(\sigma_{n+1})$. None of these simple factors is equal to $\Delta_{n+1}$, hence $\inf(y)=0$ and $\sup(y)=4k+1$. The preferred prefix of this element is then $\mathfrak p(x(\sigma_{n+1})^{4k+1}) = \partial(\sigma_{n+1}) \wedge  (\Delta_n\sigma_{n+1}) = \Delta_n$. Notice that this corresponds to the fourth condition in Proposition~\ref{P:preferred_prefix_fixed_round_curves}, applied to $y$ and $\mathcal C_{1,n}$.

Since $\mathfrak p(y)=\Delta_n$, applying a cyclic sliding to $y$ merely conjugates its component $x$ by $\Delta_n$, that is, $\mathfrak s(y)=\Delta_n^{-1} y \Delta_n = x' (\sigma_{n+1})^{4k+1}$, where $x'=\Delta_n^{-1}x \Delta_n$. If we define $i_r'=n-i_r$ for $r=2,\ldots,2k$, also $i_\eta'=n-i_\eta$ and $\eta'=\Delta_n^{-1} \eta \Delta_n$, we see that $\eta'$ is a simple conjugate of $\delta$ such that $\sigma_{i_\eta'}\in F(\eta')$, hence $x'= (\Delta_n)^{2k} x_{\eta',i_\eta',i_2'\ldots,i_{2k}'}$. We can then apply the above argument to $\mathfrak s(y)$ and $x'$, to conclude that $\mathfrak p(\mathfrak s(y))=\Delta_n$, hence $\mathfrak s^2(y) = \Delta_n^{-2} y \Delta_n^{2} = y$. Therefore $y$ belongs to a sliding circuit, so $y\in SC(\beta)$.

Finally notice that $y$ was defined by choosing $\eta$ and $i_2,\ldots,i_{2k}$ (the index $i_\eta$ is determined by $\eta$). Distinct choices of these data yield different values of $x=y_{[\mathcal C_{1,n}\in \mathcal F]}$, hence different values of $y$. Therefore there are at least as many elements in $SC(\beta)$ as possibilities we have for choosing these values. There are $2^{n-2}$ choices for $\eta$ and $n-1$ choices for each $i_r$, for $r=2,\ldots,2k$, hence there are at least $2^{n-2} (n-1)^{2k-1}$ elements in $SC(\beta)$.
\end{proof}

We conclude by noticing that the above example, as well as most examples of these kind, can be treated in a more intelligent way than just computing its set of sliding circuits, provided one needs to know whether a given element is conjugate to it. If one knows its essential reduction curves (as it is the case), one can apply cyclic sliding to each component, defining a smaller subset of $SC(\beta)$ containing the conjugates of $\beta$ in which {\it every component} belongs to a sliding circuit.  In order to do something like that, one needs an efficient algorithm to detect the canonical reduction system of a braid. There are some algorithms to do this: one is given by Bestvina and Handel~\cite{BH} using the theory of train tracks, and there is another one which will be soon available~\cite{GW2}, which is an improvement of the one given in~\cite{BGN} and~\cite{BGN2}, using Garside theory. None of these algorithms are shown to be polynomial with respect to the number of strands or the length of the braid, although both seem to be very fast in most cases. There are some examples for which the algorithm in~\cite{BH} is not efficient. We do not know of any such example for the one in~\cite{GW2}, and it is conjectured to be polynomial. We refer to~\cite{GW2} for more details.



\end{document}